\documentclass[12pt, reqno]{amsart}

\usepackage{amsmath, amssymb, amsthm, amsfonts}


\usepackage[mathscr]{euscript} 
\usepackage{stmaryrd} 

\usepackage[xcharter]{newtxmath}

\usepackage{graphics, stackrel}
\usepackage{graphicx}

\usepackage{verbatim}
\usepackage{natbib}
\usepackage{enumitem}


\usepackage{caption}
\usepackage{subcaption}

\usepackage{booktabs}

\usepackage{fancyvrb}
\usepackage[dvipsnames,svgnames,table]{xcolor}
\usepackage{mdwlist}

\usepackage[breaklinks=true, citecolor=Brown, colorlinks=true, linkcolor=blue]{hyperref}

\usepackage{enumitem}
\setlist[enumerate]{itemsep=2pt,topsep=3pt}
\setlist[itemize]{itemsep=2pt,topsep=3pt}
\setlist[enumerate,1]{label=(\roman*)}

\usepackage{mathrsfs}  
\usepackage{bbm}
\usepackage{bm}        

\usepackage[left=1.25in, right=1.25in, top=1.0in, bottom=1.15in, includehead, includefoot]{geometry}

\renewcommand{\leq}{\leqslant}
\renewcommand{\geq}{\geqslant}

\providecommand{\inner}[1]{\left\langle{#1}\right\rangle}

\usepackage{enumitem}
\setlist[enumerate]{itemsep=2pt,topsep=3pt}
\setlist[itemize]{itemsep=2pt,topsep=3pt}
\setlist[enumerate,1]{label={\upshape (\roman*)}}

\usepackage[ruled, linesnumbered]{algorithm2e}



\setlength{\parskip}{1.5ex plus0.5ex minus0.5ex}
\setlength{\parindent}{0pt}


\DeclareMathOperator{\fix}{fix}

\DeclareMathOperator*{\argmax}{arg\,max}



\newcommand{\setntn}[2]{ \{ #1 : #2 \} }

\newcommand{\eqdist}{\stackrel {\textrm{ \scriptsize{d} }} {=} }

\newcommand{\1}{\mathbbm 1}

\newcommand*\diff{\mathop{}\!\mathrm{d}}



\newcommand{\bB}{\mathcal B}

\newcommand{\eE}{\mathcal E}
\newcommand{\fF}{\mathscr F}

\newcommand{\pP}{\mathcal P}
\newcommand{\vV}{\mathcal V}

\newcommand{\RR}{\mathbbm R}

\newcommand{\NN}{\mathbbm N}

\newcommand{\TT}{\mathbbm T}

\newcommand{\PP}{\mathbbm P}

\newcommand{\EE}{\mathbbm E}

\newcommand{\Xsf}{\mathsf X}

\newcommand{\Asf}{\mathsf A}
\newcommand{\Gsf}{\mathsf G}

\renewcommand{\phi}{\varphi}
\renewcommand{\epsilon}{\varepsilon}

\theoremstyle{plain}
\newtheorem{theorem}{Theorem}[section]
\newtheorem{corollary}[theorem]{Corollary}
\newtheorem{lemma}[theorem]{Lemma}
\newtheorem{proposition}[theorem]{Proposition}

\theoremstyle{definition}

\newtheorem{example}{Example}[section]

\newtheorem{assumption}{Assumption}[section]



\newcommand{\navy}[1]{\textit{\textcolor{blue}{#1}}}

\begin{document}

\title{Dynamic Programs on Partially Ordered Sets}

\author{John Stachurski}
\address{Research School of Economics, Australian National University}
\email{john.stachurski@anu.edu.au}

\author{Thomas J. Sargent}
\address{Department of Economics, New York University}
\email{ts43@nyu.edu}

\thanks{The authors thank Dimitri Bertsekas, Anmol Bhandari, Shu Hu, Yuchao Li,
    Rami Tabri,
Jingni Yang, and Junnan Zhang for valuable comments and suggestions.}

\begin{abstract}
    We  represent a dynamic program as a family of operators acting on a partially ordered set.  We provide an optimality theory based  on order-theoretic assumptions and show how many applications of dynamic programming fit into this framework.  These range from traditional dynamic programs to those involving nonlinear recursive preferences, desire for robustness, function approximation, Monte Carlo sampling and distributional dynamic programs. We apply our framework to establish new optimality and algorithmic results for specific applications. 
\end{abstract}

\date{\today}

\maketitle

\section{Introduction}\label{s:intro}

Dynamic programs occur in many fields including 
operations research, artificial intelligence, economics, and finance  
\citep{bauerle2011markov, bertsekas2021rollout, kochenderfer2022algorithms}.  They  are used to
price products, control aircraft, sequence DNA, route shipping, recommend
information, and solve frontier research problems.  Within
economics, applications of dynamic programming   range from   monetary and
fiscal policy to asset pricing, unemployment, firm investment, wealth dynamics,
inventory control, commodity pricing, sovereign default, 
natural resource extraction, retirement decisions,
portfolio choice, and dynamic pricing.  

The key idea behind dynamic programming is to reduce an  intertemporal problem with many
stages into a two-period problem by  assigning appropriate
values to future states \citep{bellman1957dynamic}. While optimality theory for
conventional dynamic programs---often called Markov decision processes
(MDPs)---is well understood (see, e.g., \cite{puterman2005markov},
\cite{bauerle2011markov}, \cite{hernandez2012discrete},
\cite{bertsekas2012dynamic}), many recent applications fall outside this
framework.    These include models with nonlinear discounting 
\cite{bauerle2021stochastic}, Epstein--Zin preferences
\citep{epstein1989risk}, risk-sensitive preferences \citep{hansen1995discounted,
bauerle2018stochastic}, adversarial agents \citep{cagetti2002robustness,
hansen2011robustness}, and ambiguity \citep{iyengar2005robust, yu2024fast}.   

Mathematicians have constructed  frameworks that  include both
standard MDPs and the growing list of nonstandard applications discussed in the
last paragraph. \cite{bertsekas2022abstract} contributed important work in this direction.  His ``abstract dynamic programming'' framework
extends earlier ideas dating back to \cite{denardo1967contraction} and
\cite{bertsekas1977monotone} and
generalizes the traditional Bellman equation in ways  that can represent  many
different models. Recent applications of the framework include
\cite{ren2021dynamic}, \cite{bloise2024not} and \cite{toda2023unbounded}.

Nevertheless, many  classes of dynamic programs  lie
outside the framework of \cite{bertsekas2022abstract}.  One such class is 
dynamic programs that reverse the order of expectation and maximization in the
Bellman equation, with well known examples in optimal stopping
\citep{jovanovic1982selection, hubmer2020sources},
Q-learning (see, e.g., \cite{watkins1989learning, kochenderfer2022algorithms}), and
structural estimation (\cite{rust1987optimal}, \cite{rust1994structural}, and
\cite{mogensen2018solving}, etc.). Another such class involves dynamic programs with
value functions that are not real-valued. Examples can be
found in distributional dynamic programming \citep{bellemare2017distributional}, where states are mapped to distributions, and in
empirical dynamic programming, where states are mapped to 
random elements taking values in a function space
\citep{munos2008finite, haskell2016empirical, bertsekas2021rollout,
kalathil2021empirical, rust2002there, sidford2023variance}. A related situation
occurs with  dynamic programs cast  in $L_p$ spaces, where value ``functions''
are actually  equivalence classes of functions.

Another motivation for extending the framework of \cite{bertsekas2022abstract}
is to study approximate dynamic programming, which replaces exact
value and policy functions with parametric or nonparametric approximations
\citep{powell2016perspectives, bertsekas2021rollout, bertsekas2022abstract}.
The framework developed here facilitates viewing at least some of these approximate dynamic
programs as dynamic programs in their own right.  One advantage  is acquiring the ability to analyze whether an  approximate dynamic program under study 
 is a well-defined Bellman-type optimization problem, in the sense that a single
policy function is optimal starting from all possible initial states (see, e.g.,
\cite{naik2019discounted}).  

In order to capture the broad class of applications described above,
 the framework developed in this paper replaces 
traditional value functions  with elements of an abstract
partially ordered set $V$. Policies that are traditionally understood as
mappings from states to actions now  become abstract indices over a family $\setntn{T_\sigma}{\sigma \in \Sigma}$ of
``policy operators.''  Each
policy operator $T_\sigma$ is an order preserving self-map on $V$.   Whenever it exists, the lifetime
value $v_\sigma$ of a policy $\sigma \in \Sigma$ is identified with the unique
fixed point of $T_\sigma$. A policy $\sigma^*$ is defined to
be optimal when $v_{\sigma^*}$ is a greatest element of
$\setntn{v_\sigma}{\sigma \in \Sigma}$. 
In this setting we provide an order-theoretic treatment of dynamic programming
and describe  conditions under which some classical dynamic programming optimality
results hold, e.g., the value function is a unique fixed point of the Bellman
equation, Bellman's principle of optimality is valid.  We also provide  conditions
under which standard  dynamic programming algorithms converge.  

Our framework brings three significant benefits.  First,  policy operators are  general enough 
to represent both standard and many nonstandard dynamic programs.  Second, because we work in an abstract partially ordered space, it is
possible to handle Bellman equations defined not only over spaces of real-valued
functions, but also over spaces of distributions and spaces of random functions. Third, its high level
of abstraction simplifies analysis and  isolates  roles of various assumptions.
We illustrate our  framework by applying it to  establish new results in the context of several applications.
.

Section~\ref{s:prelim} introduces terminology and essential concepts.
Section~\ref{s:adps} introduces our abstract dynamic programming framework  and provides  examples that help illustrate the definitions.  Section~\ref{s:mo} defines
optimality and Section~\ref{s:opres} provides fundamental optimality results that  are then proved in Section~\ref{s:ir}.  
Section~\ref{s:apps} presents applications  and
Section~\ref{s:con} concludes.

\section{Preliminaries}\label{s:prelim}

Let $V = (V, \preceq)$ be a partially ordered set.  We use the symbol $\bigvee$
to represent suprema; for example, if $\{v_\alpha\}_{\alpha \in \Lambda}$ is a
subset of $V$, then $\bigvee_\alpha v_\alpha$ is the least element of the set of
upper bounds of $\{v_\alpha\}_{\alpha \in \Lambda}$.  $V$ is called \navy{bounded}
when $V$ has a least and greatest element.  A subset $C$ of $V$ is called
a \navy{chain} if it is totally ordered by $\preceq$. A sequence $(v_n)_{n \in
\NN}$ in $V$ is called \navy{increasing} if $v_n \preceq v_{n+1}$ for all $n \in
  \NN$. If $(v_n)$ is increasing and $\bigvee_n v_n = v$ for some $v \in V$,
  then we write $v_n \uparrow v$.
A partially ordered set $V$ is called \navy{chain complete}
(resp., \navy{countably chain complete}) if $V$ is bounded
and every chain (resp., every increasing sequence) in $V$ has a supremum.  $V$
is called \navy{countably Dedekind complete} if every 
countable subset of $V$ that is bounded above (i.e., the set of upper bounds is
nonempty) has a supremum in $V$.

A self-map $S$ on $V$ is called
\navy{order preserving} on $V$ when $v, w \in V$ and $v \preceq w$
        imply $Sv \preceq Sw$, and
\navy{order continuous} on $V$ if, for any $(v_n)$ in $V$ with $v_n
        \uparrow v$, we have $S v_n \uparrow S v$.\footnote{The definition of
        order continuity varies across subfields of mathematics but the one just
        given suffices for our purposes.} 
Simple arguments show that if $S$ is order continuous on $V$ then $S$ is order preserving.

In the theorem below, $S$ is a self-map on partially ordered set $V$ and
$\fix(S)$ is the set of all fixed points of $S$ in $V$. While the result is
well-known, our version is slightly nonstandard so we include a partial proof.

\begin{theorem}\label{t:tk}
    The set $\fix(S)$ is nonempty if either
    \begin{enumerate}
        \item $V$ is chain complete and $S$ is order preserving, or
        \item $V$ is countably chain complete and $S$ is order continuous.
    \end{enumerate}
     Moreover, in the second case, 
        $v \in V$ and $v \preceq Sv$ implies
        $\bigvee_n S^n v \in \fix(S)$.
\end{theorem}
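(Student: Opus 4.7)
The plan is to handle the two cases separately, with (ii) being essentially an explicit Kleene-style iteration and (i) requiring a Zorn/Bourbaki--Witt argument on the set of post-fixed points. I would treat the second statement first, since it also proves existence in case (ii), and only then attack case (i), where I must do real work to produce a fixed point from order preservation alone.

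For the ``moreover'' clause in (ii), suppose $v \in V$ with $v \preceq Sv$. Order continuity implies order preservation, so applying $S$ repeatedly to $v \preceq Sv$ yields $S^n v \preceq S^{n+1} v$ for all $n$, i.e.\ $(S^n v)$ is an increasing sequence. Because $V$ is countably chain complete, the supremum $v^* := \bigvee_n S^n v$ exists in $V$ and $S^n v \uparrow v^*$. Order continuity of $S$ then gives $S^n v \uparrow v^*$ implies $S(S^n v) \uparrow S v^*$, i.e.\ $S^{n+1} v \uparrow Sv^*$. But $(S^{n+1} v)$ is a cofinal subsequence of $(S^n v)$, hence also has supremum $v^*$; uniqueness of suprema gives $S v^* = v^*$. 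To deduce existence in (ii), I would apply this to $v = \bot$, the least element of $V$ (which exists because $V$ is bounded), noting that $\bot \preceq S\bot$ holds automatically.

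For (i), I would introduce the set $A := \{v \in V : v \preceq Sv\}$ and show that it has a maximal element which must be a fixed point. The set $A$ is nonempty because $\bot \in A$. To apply Zorn's lemma I need every chain $C \subseteq A$ to have an upper bound in $A$: chain completeness of $V$ provides $c := \bigvee C \in V$, and for each $x \in C$ the chain property with order preservation gives $x \preceq Sx \preceq Sc$, so $c$ is an upper bound of $C$ below $Sc$, whence $c \preceq Sc$ and $c \in A$. Zorn then yields a maximal element $v^* \in A$. Applying $S$ to $v^* \preceq S v^*$ shows $S v^* \preceq S(Sv^*)$, so $S v^* \in A$; by maximality $v^* = S v^*$.

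The main obstacle is case (i): without any continuity hypothesis I cannot reach a fixed point by countable iteration, and the argument must appeal to Zorn's lemma (or, equivalently, a transfinite Bourbaki--Witt construction) to pass from ``every chain in $A$ has an upper bound in $A$'' to the existence of a maximal element. Everything else, including the verification that $A$ is closed under suprema of chains and that maximal elements of $A$ are fixed points, is a direct consequence of order preservation. Case (ii), by contrast, is essentially a one-line computation once the iterates are recognized as forming an increasing sequence.
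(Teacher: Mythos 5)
Your proposal is correct. For case (ii) and the ``moreover'' clause you follow essentially the same route as the paper: iterate $S$ from a post-fixed point $v \preceq Sv$, use countable chain completeness to obtain $\bar v = \bigvee_n S^n v$, use order continuity to identify $S\bar v$ with the supremum of the shifted (cofinal) sequence, and then specialize to $v = \bot$ for existence. For case (i) you diverge: the paper simply cites Theorems~8.11 and 8.22 of \cite{davey2002introduction}, whereas you give a self-contained argument by applying Zorn's lemma to the set $A = \{v \in V : v \preceq Sv\}$ of post-fixed points, checking that $A$ is nonempty (it contains $\bot$), that the supremum of any chain in $A$ again lies in $A$, and that a maximal element of $A$ must be fixed since $S$ maps $A$ into itself. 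This argument is sound and has the virtue of making the paper's black box explicit; the only trade-offs are that it invokes the axiom of choice (the cited CPO fixed-point theorem can be proved choice-free via Bourbaki--Witt) and that it produces only \emph{some} fixed point rather than a least one, but the theorem asks only for nonemptiness of $\fix(S)$, so nothing is lost.
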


\begin{proof}
    For a proof of case (i), see, for example, Theorems~8.11 and 8.22 of
    \cite{davey2002introduction}.  As for (ii), let $S, V$ be as
    stated in (ii) and fix $v \in V$ with $v \preceq Sv$.   $S$ is order
    continuous and hence order preserving, so $(v_n) \coloneq (S^n v)$ is
    increasing. As $V$ is countably chain complete,
    the suprema $\bigvee_{n \geq 1} v_n$ and $\bigvee_{n \geq 1} S v_n$ exist
    in $V$.  If $\bar v \coloneq \bigvee_n v_n$, then
        $S \bar v 
        = S \bigvee_{n \geq 1} v_n
        = \bigvee_{n \geq 1} S v_n
        = \bigvee_{n \geq 2} v_n
        = \bar v$, where the second equality is by order continuity.
    Hence $\bar v \in \fix(S)$. Finally, countable chain completeness implies
    that $V$ has a least element $\bot$.  We then have $\bot \preceq S \bot$, so
    $\fix(S)$ is nonempty.
\end{proof}

We call a self-map $S$ on $V$ \navy{upward stable} if $S$ has a unique fixed
point $\bar v$ in $V$ and $v \preceq S v$ implies $v \preceq \bar v$,
\navy{downward stable} if $S$ has a unique fixed point $\bar v$ in $V$ and $S  v
\preceq v$ implies $\bar v \preceq v$,  and \navy{order stable} if $S$ is both
upward and downward stable. Order stability is a weak and purely order-theoretic
version of asymptotic stability.

\begin{example}\label{eg:pospace}
    Let $V$ be  a metric space endowed with a
    closed partial order $\preceq$, so that $u_n \preceq v_n$ for all $n$
    implies $\lim_n u_n \preceq \lim_n v_n$ whenever the limits exist,
    and let $S \colon V \to V$ be order preserving and
    globally stable under the metric on $V$, so that $S$ has a unique fixed point $\bar v$ in
    $V$ and $\lim_n S^n v = \bar v$ for all $v \in V$.  If $v \preceq Sv$, then 
    $v \preceq S^n v$ for all $n$.  Taking the limit and using the fact that
    $\preceq$ is closed yields $v \preceq \bar v$.  Hence $S$ is upward stable.
    A similar argument shows that $S$ is downward stable.
\end{example}

\begin{lemma}\label{l:ocius}
    Let $S$ be a self-map on $V$ with at most one fixed point in $V$. If either
    \begin{enumerate}
        \item $V$ is chain complete and $S$ is order preserving, or
        \item $V$ is countably chain complete and $S$ is order continuous, 
    \end{enumerate} 
    then $S$ is order stable on $V$.
\end{lemma}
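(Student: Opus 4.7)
The plan is to first use Theorem~\ref{t:tk} to pin down existence and uniqueness of the fixed point, and then establish upward and downward stability by restricting $S$ to suitable $S$-invariant sub-posets on which Theorem~\ref{t:tk} applies again.

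Under either hypothesis, Theorem~\ref{t:tk} yields that $\fix(S)$ is nonempty. Combined with the standing assumption that $S$ has at most one fixed point, we conclude that $S$ has a unique fixed point $\bar v \in V$. What remains is to show that $v \preceq Sv$ forces $v \preceq \bar v$ and that $Sv \preceq v$ forces $\bar v \preceq v$.

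For upward stability, fix $v \in V$ with $v \preceq Sv$ and consider the up-set $U \coloneq \{w \in V : v \preceq w\}$, equipped with the order inherited from $V$. The element $v$ is the least element of $U$, and the greatest element $\top$ of $V$ (which exists because $V$ is bounded) also lies in $U$ since $v \preceq \top$, so $U$ is bounded. If $C \subseteq U$ is a chain (respectively, an increasing sequence), then $C \subseteq V$ has a supremum $\bigvee C$ in $V$; since $v$ is a lower bound for $C$, we get $v \preceq \bigvee C$, so $\bigvee C \in U$ and it is the supremum of $C$ in $U$ as well. Thus $U$ inherits the relevant completeness property from $V$. Next, if $w \in U$, then $v \preceq Sv \preceq Sw$ using $v \preceq Sv$ and order preservation, so $Sw \in U$; hence $S$ restricts to a self-map on $U$. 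This restriction is order preserving, and in case (ii) it is order continuous, because suprema of increasing sequences in $U$ coincide with their suprema in $V$. Applying Theorem~\ref{t:tk} to $S|_U$ produces a fixed point of $S$ in $U$, which by uniqueness must be $\bar v$, giving $v \preceq \bar v$.

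For downward stability, the symmetric argument uses the down-set $L \coloneq \{w \in V : w \preceq v\}$, whose greatest element is $v$ and whose least element is the bottom $\bot$ of $V$; the condition $Sv \preceq v$ and order preservation ensure $S(L) \subseteq L$, and the supremum of any chain (or increasing sequence) in $L$ computed in $V$ is dominated by $v$ and hence lies in $L$. Theorem~\ref{t:tk} applied to $S|_L$ then delivers a fixed point of $S$ in $L$, which is $\bar v$ by uniqueness, yielding $\bar v \preceq v$. The main obstacle, and the only nontrivial verification, is confirming that $U$ and $L$ genuinely inherit the completeness and (where needed) order continuity properties from $V$; but this reduces to the observation that suprema of chains or increasing sequences are computed identically in the sub-poset and in $V$ whenever the sub-poset is closed under these suprema, which is exactly what the upper/lower bound $v$ guarantees.
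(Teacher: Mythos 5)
Your proposal is correct and follows essentially the same route as the paper: the paper also obtains $\bar v$ from Theorem~\ref{t:tk}, restricts $S$ to the interval $[v, \top]$ (your up-set $U$, which coincides with it since $V$ is bounded), and applies Theorem~\ref{t:tk} again to place the unique fixed point above $v$, with the symmetric argument for downward stability. Your additional verification that the sub-poset inherits (countable) chain completeness and order continuity is the detail the paper leaves implicit.
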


\begin{proof}
    First suppose that $S$ is order preserving and $V$ is chain complete, with least element $\bot$ and
    greatest element $\top$. By Theorem~\ref{t:tk}, $S$ has a fixed point $\bar v
    \in V$.  By assumption, $\bar v$ is the only fixed point of $S$ in $V$.  Now
    fix $v \in V$ with $v \preceq Sv$. Since $I \coloneq [v, \top]$ is itself
    chain complete, and since $S$ maps $I$ to itself, $S$ has a fixed point in
    $I$.  Hence $\bar v \in I$, which yields $v \preceq \bar v$.  This proves
    upward stability.  The proof of downward stability is similar.

    Now suppose that $V$ is countably chain complete and $S$ is order
    continuous. Let $\bar v$ be the unique fixed point of $S$ in $V$ (existence
    of which follows from Theorem~\ref{t:tk}). Pick any $v \in V$
    with $v \preceq Sv$.  Since $I \coloneq [v, \top]$ is itself countably
    chain complete, and since $S$ is an order continuous map from $I$ to itself,
    Theorem~\ref{t:tk} implies that $S$ has a fixed point in $I$.
    Hence $\bar v \in I$, which yields $v \preceq \bar v$.  This proves upward
    stability.  The proof of downward stability is similar.
\end{proof}

\section{Abstract Dynamic Programs}\label{s:adps}

We define an \navy{abstract dynamic program} (ADP) to be a pair $(V, \TT)$, where 
$V = (V, \preceq)$ is a partially ordered set
and $\TT = \setntn{T_\sigma}{\sigma \in \Sigma}$ is a family of
        order preserving self-maps on $V$. 
The set $V$ is called the \navy{value space}.
The operators in $\TT$ are called \navy{policy operators}.
$\Sigma$ is an arbitrary index set and elements of $\Sigma$ 
will be referred to as \navy{policies}. 
In applications we impose conditions under which each $T_\sigma$ has a unique
fixed point.  In these settings, the significance of $T_\sigma$ is that its
fixed point, denoted below by $v_\sigma$, represents the lifetime value (or
cost) of following policy $\sigma$. 

\begin{example}
    In some settings, $V$ is a set of functions and $\preceq$ is
    the pointwise partial order $\leq$ (i.e., $v \leq w$ if $v(x) \leq w(x)$ for all
    $x$ in $\Xsf$).  The value $v_\sigma(x)$ represents the lifetime
    value of following policy $\sigma$ when the initial state is $\Xsf$.
\end{example}

Let $(V, \TT)$ be an ADP with policy set $\Sigma$. Given $v \in V$, a policy
$\sigma$ in $\Sigma$ is called \navy{$v$-greedy} if $T_{\sigma} \, v \succeq
T_\tau \, v$ for all $\tau \in \Sigma$.  We call $(V, \TT)$ \navy{regular} when
each $v \in V$ has at least one $v$-greedy policy.  Given $v \in V$, we set
\begin{equation}\label{eq:adp_bellop}
    Tv = \bigvee_{\sigma \in \Sigma} T_\sigma \, v
\end{equation}
whenever the supremum exists. We call $T$ the \navy{Bellman operator} generated
by $(V, \TT)$.   We say that $v \in V$ \navy{satisfies the Bellman equation} if $Tv = v$.

For a given ADP $(V, \TT)$, we define three sets:
\begin{itemize}
    \item $V_G \coloneq \setntn{v \in V}{\text{ at least one $v$-greedy policy exists}}$,
    \item $V_\Sigma \coloneq \setntn{v \in V}{\text{ $v$ is a fixed point of
            $T_\sigma$ for some $\sigma \in \Sigma$}}$, and
    \item $V_U \coloneq \setntn{v \in V}{ \, v \preceq Tv}$.
\end{itemize}
The next lemma shows that $T$ has attractive properties on $V_G$.
  
\begin{lemma}\label{l:torper}
    The Bellman operator $T$ has the following properties:
    \vspace{0.4em}
    \begin{enumerate}
        \item $T$ is well-defined and order preserving on $V_G$.    
        \item For $v \in V_G$ we have $T_\sigma \, v = T v$ if and only if $\sigma \in \Sigma$ is $v$-greedy.
    \end{enumerate}
\end{lemma}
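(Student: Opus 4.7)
The plan is to unpack the definition of $v$-greediness and observe that, at any $v \in V_G$, a $v$-greedy policy makes the supremum in \eqref{eq:adp_bellop} attained, after which both claims follow from basic order-theoretic manipulations.

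For part (i), I would first fix $v \in V_G$ and choose a $v$-greedy policy $\sigma^*$. By definition of greediness, $T_{\sigma^*} v \succeq T_\tau v$ for every $\tau \in \Sigma$, so $T_{\sigma^*} v$ is an upper bound of $\setntn{T_\tau v}{\tau \in \Sigma}$; since it is itself an element of this set, it is in fact the maximum and hence the supremum. This gives $Tv = T_{\sigma^*} v$, so $T$ is well-defined on $V_G$. To get order preservation, take $v, w \in V_G$ with $v \preceq w$ and pick greedy policies $\sigma^*_v$ and $\sigma^*_w$ for $v$ and $w$, respectively. Then
\[
    Tv \;=\; T_{\sigma^*_v} v \;\preceq\; T_{\sigma^*_v} w \;\preceq\; T_{\sigma^*_w} w \;=\; Tw,
\]
where the first inequality uses that each $T_\sigma$ is order preserving and the second uses that $\sigma^*_w$ is $w$-greedy.

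For part (ii), the ``if'' direction is essentially the computation already performed in part (i): if $\sigma$ is $v$-greedy, the argument above shows $T_\sigma v$ equals the supremum $Tv$. For the converse, suppose $T_\sigma v = Tv$ for some $v \in V_G$. Using part (i), we have $Tv = T_{\sigma^*} v$ for any $v$-greedy $\sigma^*$, so for every $\tau \in \Sigma$,
\[
    T_\sigma v \;=\; T_{\sigma^*} v \;\succeq\; T_\tau v,
\]
which is exactly the statement that $\sigma$ is $v$-greedy.

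I do not expect any real obstacle here; the only subtlety worth flagging is that well-definedness of $T$ on $V_G$ is not automatic for an arbitrary family of operators on a partially ordered set, and the role of the hypothesis $v \in V_G$ is precisely to force the supremum to be attained by a member of $\setntn{T_\tau v}{\tau \in \Sigma}$. Once this is in hand, both claims reduce to bookkeeping with the definitions of supremum, order preservation, and greediness.
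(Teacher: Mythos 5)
Your proposal is correct and follows essentially the same route as the paper: identify that a $v$-greedy policy makes $T_{\sigma}v$ the greatest element (hence supremum) of $\setntn{T_\tau v}{\tau\in\Sigma}$, then derive well-definedness, order preservation via $Tv = T_{\sigma^*_v}v \preceq T_{\sigma^*_v}w \preceq Tw$, and the two directions of (ii) by the same bookkeeping. The only differences are cosmetic (the paper proves (ii) first and justifies the final inequality in the order-preservation chain by the supremum property rather than by $w$-greediness), so no further comment is needed.
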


\begin{proof}
    We begin with part (ii).  Fix $v \in V_G$ and let $\sigma$ be $v$-greedy.
    Then, by definition, $T_\sigma \, v$ is the greatest element of $\{T_\tau \,
    v\}_{\tau \in \Sigma}$. A greatest element is also a supremum, so $T v  =
    T_\sigma \, v$.  Conversely, if
    $Tv = T_\sigma \, v$, then $T_\tau \, v \preceq T_\sigma \, v$ for all $\tau
    \in \Sigma$.  Hence (ii) holds.  As for (i),
    fixing $v \in V_G$, a $v$-greedy policy exists, so $Tv$ is
    well-defined by part (ii). As for the order preserving claim, fix
    $v, w \in V_G$ with $v \preceq w$. Let $\sigma \in \Sigma$ be $v$-greedy.
    Since $T_\sigma$ is order preserving, we have $T v = T_\sigma \, v \preceq
    T_\sigma \, w \preceq T w$.  
\end{proof}

\begin{lemma}\label{l:vsigvu}
    If $(V, \TT)$ is regular, then $V_\Sigma \subset V_U$.
\end{lemma}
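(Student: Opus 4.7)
The plan is to unpack the three definitions and chain them together. Fix an arbitrary $v \in V_\Sigma$. By definition, there exists some $\sigma \in \Sigma$ with $T_\sigma v = v$. The goal is to show $v \preceq Tv$, i.e., that $v \in V_U$.

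The key is that regularity gives $V_G = V$, so in particular $v \in V_G$. By Lemma~\ref{l:torper}(i), $Tv$ is therefore well-defined. Now pick any $v$-greedy policy $\tau \in \Sigma$ (which exists by regularity). By Lemma~\ref{l:torper}(ii), $T_\tau v = Tv$. Since $\tau$ is $v$-greedy, we have $T_\tau v \succeq T_\sigma v$, and combining with $T_\sigma v = v$ yields
\[
    Tv \;=\; T_\tau v \;\succeq\; T_\sigma v \;=\; v,
\]
which is exactly the statement $v \in V_U$.

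There is essentially no obstacle here: the proof is a one-line unpacking of definitions once Lemma~\ref{l:torper} is in hand. The only conceptual point worth noting is that regularity plays a dual role, first to guarantee that $Tv$ exists as a supremum (via $v \in V_G$) and second to supply a greedy $\tau$ that realizes this supremum, so that the inequality $T_\tau v \succeq T_\sigma v$ can be transferred to $T$ itself.
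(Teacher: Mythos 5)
Your proof is correct and follows essentially the same route as the paper: both use regularity to ensure $Tv$ is well-defined and then observe $v = T_\sigma v \preceq Tv$. The only cosmetic difference is that you obtain the inequality by passing through a greedy policy $\tau$ that attains the supremum, whereas the paper invokes directly that the supremum $Tv = \bigvee_\tau T_\tau v$ is an upper bound for $T_\sigma v$.
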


\begin{proof}
    Fix $v_\sigma \in V_\Sigma$ and let $T_\sigma$ be such that $v_\sigma$ is a
    fixed point.  Since $T$ is well-defined on all of $V$, we have
    $v_\sigma = T_\sigma \, v_\sigma \preceq T v_\sigma$.  In particular, $v_\sigma \in V_U$.
\end{proof}

\subsection{Example: MDPs}\label{ss:mdps}

We begin with a simple and familiar example involving 
Markov decision processes (MDPs,
see, e.g., \cite{bauerle2011markov}).  The objective is to maximize 
$\EE \sum_{t \geq 0} \beta^t r(X_t, A_t)$
where $X_t$ takes values in finite set $\Xsf$ (the state space),
$A_t$ takes values in finite set $\Asf$ (the action space),
$\Gamma$ is a nonempty correspondence from $\Xsf$ to $\Asf$
(the feasible correspondence),
$\Gsf \coloneq \setntn{(x, a) \in \Xsf \times \Asf}{a \in \Gamma(x)}$
        denotes the feasible state-action pairs,
    $r$ is a reward function defined on $\Gsf$, 
    $\beta \in (0,1)$ is a discount factor, and 
    $P \colon \Gsf \times \Xsf \to [0,1]$ provides transition
        probabilities. The Bellman equation  is 
\begin{equation}\label{eq:mdp_bell}
    v(x) = \max_{a \in \Gamma(x)} 
    \left\{
        r(x, a) + \beta \sum_{x'} v(x') P(x, a, x')
    \right\}
    \qquad\qquad (x \in \Xsf).
\end{equation}
The set of feasible policies is the finite set $\Sigma \coloneq \setntn{\sigma
\in \Asf^\Xsf} {\sigma(x) \in \Gamma(x) \text{ for all } x \in \Xsf}$. We
combine $\RR^\Xsf$ (the set of all real-valued functions on $\Xsf$) with the
pointwise partial order $\leq$ and, for $\sigma \in \Sigma$ and $v \in
\RR^\Xsf$, define the MDP policy operator
\begin{equation}\label{eq:tsig_mdp}
    (T_\sigma \, v)(x) 
        = r(x, \sigma(x)) + \beta \sum_{x'} v(x') P(x, \sigma(x), x')
        \qquad\qquad (x \in \Xsf).
\end{equation}
Let $V \coloneq  \RR^\Xsf$ and $\TT = \setntn{T_\sigma}{\sigma \in \Sigma}$. Since each $T_\sigma$ is an
order preserving self-map on $V$, the pair $(V, \TT)$ is an ADP. 
Given $v \in V$, let $\sigma \in \Sigma$ be such that
\begin{equation}\label{eq:mdpmvg}
    \sigma(x) \in \argmax_{a \in \Gamma(x)}
    \left\{
        r(x, a) + \beta \sum_{x'} v(x') P(x, a, x')
    \right\}
    \quad \text{for all $x \in \Xsf$}.  
\end{equation}
Such a $\sigma$ satisfies $T_\sigma \, v \geq T_\tau \, v$ for all $\tau \in
\Sigma$, implying $\sigma$ is $v$-greedy. Moreover, since $\Gamma$ is nonempty,
at least one policy obeying \eqref{eq:mdpmvg} exists.  This proves that
$(V, \TT)$ is regular.

We stated above that, in applications, the fixed point of each
$T_\sigma \in \TT$ is the lifetime value of policy
$\sigma$.  To see the idea in the MDP setting, fix $\sigma \in \Sigma$ and let 
$r_\sigma$ and $P_\sigma$ be defined by 
$P_\sigma(x, x') \coloneq P(x, \sigma(x), x')$ and 
    $r_\sigma(x) \coloneq r(x, \sigma(x))$.
In the present setting, the lifetime value of $\sigma$ given $X_0 = x$ is
understood to be $v_\sigma(x) = \EE \sum_{t \geq 0} \beta^t r_\sigma(X_t)$, where $(X_t)_{t \geq 0}$ is a Markov chain generated by
$P_\sigma$ with initial condition $X_0 = x \in \Xsf$. Pointwise on $\Xsf$, we
can express $v_\sigma$ as $v_\sigma = \sum_{t \geq 0} (\beta P_\sigma)^t r_\sigma
            = (I-\beta P_\sigma)^{-1} r_\sigma$
(see, e.g., \cite{puterman2005markov}, Theorem~6.1.1). This implies that $v_\sigma$
is the unique solution to the equation $v = r_\sigma + \beta P_\sigma \, v$.
From the definition of $T_\sigma$ in \eqref{eq:tsig_mdp}, this is
equivalent to the statement that $v_\sigma$ is the unique fixed point of
$T_\sigma$.

For the ADP $(V, \TT)$, the ADP Bellman equation
\eqref{eq:adp_bellop} reduces to the MDP Bellman equation \eqref{eq:mdp_bell}.
This connection is important because it allows to use optimality properties of $(V, \TT)$  to study optimality properties of the MDP.  To see that it holds observe that,
since $V$ is endowed with the pointwise partial order,
for given $v \in V$ and $x \in \Xsf$,
the ADP Bellman
operator \eqref{eq:adp_bellop} reduces to 
\begin{equation*}
    (T \, v)(x)
    = \sup_{\sigma \in \Sigma} (T_\sigma \, v)(x)
    = \sup_{\sigma \in \Sigma} 
    \left\{
        r(x, \sigma(x)) + \beta \sum_{x'} v(x') P(x, \sigma(x), x')
    \right\}.
\end{equation*}
By the definition of $\Sigma$, we can also write this as
\begin{equation}\label{eq:mdpt}
    (T \, v)(x) = \max_{a \in \Gamma(x)} 
    \left\{
        r(x, a) + \beta \sum_{x'} v(x') P(x, a, x')
    \right\}.
\end{equation}
Evidently $v$ satisfies the
ADP Bellman equation $T v = v$ if and only if the traditional MDP Bellman
equation \eqref{eq:mdp_bell} holds.

\subsection{Example: Risk-Sensitive Q-learning}\label{ss:rsqfac}

Some dynamic programs reverse the order of maximization and mathematical expectation.
One example is Q-factor risk-sensitive decision processes (see, e.g.,
\cite{fei2021exponential}), where the Bellman equation takes the form 
\begin{equation}\label{eq:rsqbell}
    f(x, a) = 
        r(x, a) + \frac{\beta}{\theta}
        \ln 
        \left\{ 
                \sum_{x'} \exp \left[ \theta \max_{a' \in \Gamma(x')} f(x', a')
            \right] P(x, a, x')
        \right\}
\end{equation}
for $(x,a) \in \Gsf$ and nonzero $\theta \in \RR$.
(We take $\Xsf$, $\Asf$, $\Gamma$, $\Sigma$ and $\Gsf$ as in our discussion of
MDPs in Section~\ref{ss:mdps}.) Given $\sigma \in \Sigma$, the corresponding policy operator is
\begin{equation}\label{eq:rspbell}
    (T_\sigma f)(x, a) = 
        r(x, a) + \frac{\beta}{\theta}
        \ln \left[ 
                \sum_{x'} \exp \left[ \theta f(x', \sigma(x'))
            \right] P(x, a, x')
        \right]
\end{equation}
where $f \in \RR^\Gsf \coloneq$ all real-valued functions on $\Gsf$. With $\TT
\coloneq \setntn{T_\sigma}{\sigma \in \Sigma}$ and $\Gsf$ endowed with the
pointwise order, the pair $(\RR^\Gsf, \TT)$ is an ADP.  
The ADP Bellman operator is $T f \coloneq \bigvee_\sigma T_\sigma \, f$ (by
\eqref{eq:adp_bellop}) and the ADP Bellman equation is $Tf = f$.
By replicating arguments in Section~\ref{ss:mdps},
one can show that $f \in \Gsf$ solves $Tf=f$ if and only if it
solves \eqref{eq:rsqbell}.  This allows us to study optimality properties
of the original model (as characterized by \eqref{eq:rsqbell}) through the
ADP $(\RR^\Gsf, \TT)$.

\subsection{Distributional Dynamic Programming}\label{ss:ddp}

Distributional dynamic programming focuses on an  entire distribution of  lifetime returns, not just  its expected value
\citep{bellemare2017distributional}. This 
falls outside frameworks such as \cite{puterman2005markov} or
\cite{bertsekas2022abstract} because elements of the value space $\pP^\Xsf$ are
not real-valued functions.  In this section we show how  distributional dynamic
programming  can be represented in the setting of ADPs.

In what follows, $\pP$ is the set of all probability distributions on
$\RR$ and $\pP^\Xsf$ is the set of all functions from $\Xsf$ into $\pP$. A
typical element is written as $\eta(x, \diff g)$, indicating that $\eta(x,
\cdot)$ is a distribution on $\RR$ for each $x \in \Xsf$.
For $\eta, \hat \eta \in \pP^\Xsf$, we write $\eta \preceq \hat \eta$ when
$\eta(x)$ lies below $\hat \eta(x)$ in the sense of stochastic dominance for
every $x \in \Xsf$.  Thus,
\begin{equation*}
    \eta \preceq \hat \eta
    \quad \iff \quad
    \int h(g) \eta(x, \diff g)
    \leq \int h(g) \hat \eta(x, \diff g)
    \quad \forall \, h \in ib\RR, \; x \in \Xsf,
\end{equation*}
where $ib\RR$ is the set of increasing bounded measurable functions from $\RR$
to itself.

Maintaining the MDP setting in Section~\ref{ss:mdps} but switching to a
distributional perspective, the distributional policy operator, written here as
$D_\sigma$, maps $\eta \in \pP^\Xsf$ to $D_\sigma \eta$, where $(D_\sigma
\eta)(x)$ is the distribution of the random variable $G' \coloneq r_\sigma(x) + \beta
G_{X'}$ when $G_{X'}$ is sampled by first drawing the next period state $X'$ from
$P_\sigma(x, \cdot)$ and then drawing $G$ from $\eta(X', \cdot)$.  So  the
expectation of $h \in ib\RR$ under the distribution $(D_\sigma \eta)(x)$ can be
expressed as
\begin{equation*}
    \EE h(G')
    \coloneq \inner{h, (D_\sigma \eta)(x)}
    \coloneq
    \sum_{x'} \int h(r_\sigma(x) + \beta g) \eta(x', \diff g) P_\sigma(x, x').
\end{equation*}
If we now take $\eta \preceq \hat \eta$ and $h \in ib\RR$, we get
    $\int h(r_\sigma(x) + \beta g) \eta(x', \diff g) 
    \leq \int h(r_\sigma(x) + \beta g) \hat \eta(x', \diff g) $
for any $x'$ and hence $\inner{h, (D_\sigma \eta)(x)} \leq \inner{h, (D_\sigma
\hat \eta)(x)}$.  Since this holds for any $x$ we have $D_\sigma \, \eta \preceq
D_\sigma \, \hat \eta$, so $D_\sigma$ is order preserving.  In particular,
$(\pP^\Xsf, \{D_\sigma\}_{\sigma \in \Sigma})$ is an ADP.

\subsection{Empirical Dynamic Programming}\label{ss:edp}

Monte Carlo estimates are sometimes used to approximate mathematical expectations in dynamic programs with 
 large state spaces. For example, in
\cite{haskell2016empirical}, the MDP Bellman operator \eqref{eq:mdpt} is
replaced by 
\begin{equation*}
    (\hat T \, v)(x) = \max_{a \in \Gamma(x)} 
    \left\{
        r(x, a) + \beta \frac{1}{n} \sum_{i=1}^n v(F(x, a, \xi_i))
    \right\},
\end{equation*}
where $(\xi_i)_{i=1}^n$ is a collection of random variables on probability space
$(\Omega, \fF, \PP)$ and each $F(x, a, \xi_i)$ has distribution $P(x, a,
\cdot)$. The corresponding policy operators $\hat \TT \coloneq \setntn{\hat
T_\sigma}{\sigma \in \Sigma}$ are %
\begin{equation}\label{eq:tsige}
    (\hat T_\sigma \, v)(x) 
    = r(x, \sigma(x)) + \beta \frac{1}{n} \sum_{i=1}^n v(F(x, \sigma(x), \xi_i))
    .
\end{equation}
Following \cite{haskell2016empirical}, we take $\vV$ to be the set of random
elements defined on probability space $(\Omega, \fF, \PP)$ and taking values in
the function space $\RR^\Xsf$.  To make the dependence on $\omega \in \Omega$ explicit we write a
realization as $v(\omega, \cdot)$, so that $x \mapsto v(\omega, x)$ is a
function in $\RR^\Xsf$ assigning values to states.  The policy operator
\eqref{eq:tsige} then becomes
\begin{equation}\label{eq:tsige2}
    (\hat T_\sigma \, v)(\omega, x) 
    = r(x, \sigma(x)) + \beta \frac{1}{n} \sum_{i=1}^n v(\omega, F(x, \sigma(x), \xi_i(\omega)))
    .
\end{equation}
A partial order can be introduced on $\vV$ by writing $v \leq w$ when $v(\omega,
x) \leq w(\omega, x)$ for all $\omega \in \Omega$ and $x \in \Xsf$.
It is clear that $\hat T_\sigma \, v \leq \hat T_\sigma \, w$ whenever $v \leq
w$, so $(\vV, \hat \TT)$ is an ADP.

\subsection{Example: Approximate Dynamic Programming}\label{ss:approx}

In practice, solution methods for a vast range of dynamic programs involve some
form of function approximation to simplify update steps and
generate representations of value and policy functions (see, e.g.,
\cite{powell2016perspectives, bertsekas2021rollout, bertsekas2022abstract}). For
example, the MDP policy operator $T_\sigma$ from \eqref{eq:tsig_mdp} might be
replaced by $A \circ T_\sigma$, where $A$ implements an approximation
architecture such as kernel averaging or projection onto a  space of basis functions.

When function approximation is added to policy and Bellman operators, properties needed to pose a 
well-defined dynamic program may not be satisfied.  For example, function approximations may transmute a well-behaved MDP
 into a dynamic program for which  no optimal stationary policy
exists \citep{naik2019discounted}.  
Our 
framework 
facilitating  addressing 
these issues.  For example, if the approximation operator $A$ is order
preserving, then setting $V = \RR^\Xsf$ and $\TT_A = \setntn{A \circ
T_\sigma}{\sigma \in \Sigma}$ yields an ADP $(V, \TT_A)$. Below we provide
results under which ADPs such as $(V, \TT_A)$ have well-defined optimal
policies.

\subsection{Example: Structural Estimation}\label{ss:struct}

\cite{rust1987optimal} and many subsequent authors
study discrete choice problems with Bellman equations
of the form 
\begin{equation}\label{eq:stbe}
    g(x, a) =
    \sum_{x'} \int
        \left\{ 
            \max_{a' \in \Asf}
            \left[
                r(x', a', e') + \beta g(x', a')
            \right]
        \right\} \nu(\diff e') P(x, a, x').
\end{equation}
Here $(x,a) \in \Gsf \coloneq \Xsf \times \Asf$ where $\Asf$ and $\Xsf$ are
the action and state spaces respectively.   The set $\Asf$ is finite (hence
discrete choice) and we take $\Xsf$ to be finite for simplicity (although other
settings can also be handled). We assume that $r$ is a bounded reward function,
$\beta \in (0,1)$ and $P(x, a, \cdot)$ is a probability distribution
(probability mass function) on $\Xsf$ for each $(x,a) \in \Gsf$. The component
$e'$ in the reward function takes values in some measurable space and is {\sc
iid} with distribution $\nu$.

The function $g$ can be  interpreted as an ``expected post-action value function.''
Advantages of working with this version of the Bellman equation are
discussed in \cite{rust1994structural}, \cite{kristensen2021solving} and other
sources.  Because the max operator is inside the expectation, frameworks such as
\cite{puterman2005markov} and \cite{bertsekas2022abstract} do not directly
apply. Nevertheless, we can set this problem up as an ADP by taking $\Sigma$
to be the set of maps from $\Xsf$ to $\Asf$ and, for each $\sigma \in \Sigma$,
introducing the policy operator
\begin{equation}\label{eq:ruts}
    (T_\sigma \, g)(x, a) =
    \sum_{x'}
        \int
        \left\{ 
                r(x', \sigma(x'), e') + \beta g(x', \sigma(x'))
        \right\} \nu(\diff e') P(x, a, x').
\end{equation}
Clearly $T_\sigma$ maps $\RR^\Gsf$ into itself and is order preserving on
$\RR^\Gsf$ under the pointwise partial order.  Hence, with $\TT
\coloneq \setntn{T_\sigma}{\sigma \in \Sigma}$, the pair  $(\RR^\Gsf, \TT)$ is an ADP.
Moreover, if we take $M \in \NN$
such that $|r| \leq M$ and set $W$ to all $g \in \RR^\Gsf$ with $|g| \leq
M/(1-\beta)$, then straightforward calculations show that $T_\sigma$ maps $W$ to
itself.  Hence $(W, \TT)$ is also an ADP.

\section{Properties of ADPs}\label{s:mo}

In this section we define optimality for ADPs.  We also categorize ADPs with
the aim of determining properties that lead to strong optimality results.

\subsection{Basic Properties}\label{ss:props}

Let $(V, \TT)$ be an ADP with policy set $\Sigma$. When it exists, we denote the
unique fixed point of $T_\sigma$ in $V$ by $v_\sigma$. We call $(V, \TT)$
\navy{finite} if $\TT$ is finite,
\navy{well-posed} if each $T_\sigma \in \TT$ has a unique fixed point
        $v_\sigma$ in $V$,
\navy{order stable} if each $T_\sigma \in \TT$ is order stable on $V$, 
\navy{order continuous} if each $T_\sigma \in \TT$ is order continuous
        on $V$, and \navy{bounded above} if there exists a $u \in V$ with
$T_\sigma \, u \preceq u$ for all  $T_\sigma \in \TT$.

In our applications, the lifetime value of a policy $\sigma$
coincides with the fixed point $v_\sigma$ of its policy operator $T_\sigma$.
Well-posedness is a minimal  condition because without it we cannot be sure that
policies have well-defined lifetime values. Maximizing lifetime values (or,
equivalently, minimizing lifetime costs) is the objective of dynamic programming.

\begin{example}
    The distributional ADP $(\pP^\Xsf, \{D_\sigma\}_{\sigma \in \Sigma})$ described in Section~\ref{ss:ddp} 
    is well-posed, since, given boundedness of the reward function $r$ and $\beta
    \in (0,1)$, each policy operator $D_\sigma$ has a unique fixed point in
    $\pP^\Xsf$.  This existence and uniqueness result follows from Theorem~1 of
    \cite{gerstenberg2023solutionsdistributionalbellmanequation}.
\end{example}

\begin{example}\label{eg:mdpos}
    The ADP $(V, \TT)$ generated by the MDP model in
    Section~\ref{ss:mdps} is finite, well-posed, order stable, order
    continuous, and bounded above. Finiteness holds because $\Xsf$ and $\Asf$ are finite.  $(V, \TT)$ is well-posed
    because, under the stated assumptions, each $T_\sigma \, v = r_\sigma + \beta P_\sigma \, v$ has a unique fixed point in $V$ given by $v_\sigma
    \coloneq (I-\beta P_\sigma)^{-1} r_\sigma$.  Order stability follows from 
    Lemma~\ref{l:ocius}.  $(V, \TT)$ is order continuous
    because $v_n \uparrow v$ is equivalent to 
    $v_n \to v$ pointwise when $V = \RR^\Xsf$.  Hence
    $v_n \uparrow v$ implies $T_\sigma \, v_n \uparrow T_\sigma \, v$.
    Finally, $(V, \TT)$ is bounded above because, for any $\sigma \in \Sigma$,
    \begin{equation*}
        u  = \frac{\max r }{1 - \beta}
        \quad \; \implies \; \quad
        T_\sigma \, u 
            = r_\sigma + \beta P_\sigma \, u
            \leq \max r + \beta u = u.
    \end{equation*}
\end{example}

\begin{example}\label{eg:structos}
    The ADP $(W, \TT)$ generated by the dynamic structural model in
    Section~\ref{ss:struct} is finite, well-posed, order stable, and order
    continuous. The proof is almost identical to that given in
    Example~\ref{eg:mdpos}.
\end{example}


Below we use order stability as a condition for optimality. The next lemma shows
that order continuity passes from the policy operators to the Bellman operator.

\begin{lemma}\label{l:ostst}
    If $(V, \TT)$ is order continuous and $V$ is countably chain complete, then
    $T$ is order continuous on $V$.
\end{lemma}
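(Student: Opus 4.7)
The plan is to prove order continuity of $T$ by a two-sided sandwich argument. Given an increasing sequence $(v_n)$ in $V$ with $v_n \uparrow v$, and assuming $T v_n$ and $T v$ are well-defined elements of $V$, I will show $T v_n \uparrow T v$ by first proving that $(T v_n)$ is increasing and bounded above by $T v$, and then establishing the matching lower bound using order continuity of each $T_\sigma$.

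First I would verify monotonicity and an upper bound for $(T v_n)$. Fix $m \leq n$ and any $\tau \in \Sigma$. Since $T_\tau$ is order continuous, hence order preserving, we have $T_\tau v_m \preceq T_\tau v_n \preceq T v_n$, so $T v_n$ is an upper bound of $\{T_\tau v_m\}_{\tau \in \Sigma}$, giving $T v_m \preceq T v_n$. The same reasoning with $v$ in place of $v_n$ gives $T v_n \preceq T v$ for every $n$. By countable chain completeness the supremum $\bar w \coloneq \bigvee_n T v_n$ exists in $V$ and satisfies $\bar w \preceq T v$.

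The reverse inequality $T v \preceq \bar w$ is where order continuity of the individual $T_\sigma$ is essential. Fix any $\sigma \in \Sigma$. Order continuity yields $T_\sigma v = \bigvee_n T_\sigma v_n$, and since $T_\sigma v_n \preceq T v_n \preceq \bar w$ for every $n$, taking the supremum in $n$ gives $T_\sigma v \preceq \bar w$. As $\sigma$ was arbitrary, $T v = \bigvee_\sigma T_\sigma v \preceq \bar w$. Combining with the earlier direction yields $T v = \bar w = \bigvee_n T v_n$, i.e.\ $T v_n \uparrow T v$, which is order continuity of $T$. The only delicate point is the implicit interchange of the suprema over $\sigma$ and over $n$; the sandwich makes this rigorous, with countable chain completeness supplying both $\bigvee_n T_\sigma v_n$ and $\bar w$ as elements of $V$, while order continuity of each $T_\sigma$ identifies the inner sup with $T_\sigma v$. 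No further subtlety is involved.
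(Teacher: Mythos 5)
Your proof is correct and follows essentially the same route as the paper's: both show that $\bigvee_n T v_n = Tv$ by using order continuity of each $T_\sigma$ to deduce $T_\sigma v \preceq w$ for any upper bound $w$ of $(Tv_n)$ and then taking the supremum over $\sigma$. The only cosmetic difference is that you verify monotonicity of $T$ directly from its definition as a supremum rather than via the greedy-policy characterization in Lemma~\ref{l:torper}.
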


\begin{proof}
    Fix $(v_n) \subset V$ with $v_n \uparrow \bar v \in V$. Since, $T$ is
    order preserving, $(T v_n)_{n \geq 1}$ is also increasing. Hence $\bigvee_n T
    v_n$ exists in $V$.  We claim that $\bigvee_n T v_n = T \bar v$.  
    On one hand, $T \bar v$ is an upper bound for $(T v_n)$. On the other hand, if $w
    \in V$ is such that $T v_n \preceq w$ for all $n$, then $T_\sigma \, v_n
    \preceq w$ for all $n$ and $\sigma$.  Fixing $\sigma \in \Sigma$, taking the
    supremum over $n$ and using order-continuity of $T_\sigma$ gives $T_\sigma
    \, \bar v \preceq w$. Hence $T \bar v \preceq w$, which means that $T \bar
    v$ is a least upper bound of $(T v_n)$.  This confirms that $\bigvee_n T v_n
    = T \bar v$. Hence $T$ is order continuous. 
\end{proof}

\subsection{Defining Optimality}\label{ss:defop}

Next we define optimality for ADPs  using concepts that are direct
generalizations of dynamic program optimality from existing frameworks.
To begin, we recall that, for a well-posed ADP
$(V, \TT)$ with policy set $\Sigma$, the symbol 
$V_\Sigma$ represents the set of lifetime values $\{v_\sigma\}_{\sigma \in
\Sigma}$ generated by $(V, \TT)$. A policy $\sigma \in \Sigma$ is called
\navy{optimal} for $(V, \TT)$ if $v_\sigma$ is a greatest element of $V_\Sigma$.

\begin{example}
    The ADP $(V, \TT)$ generated by the MDP model in
    Section~\ref{ss:mdps} uses the pointwise partial order on $V$,
    so $v_\sigma$ is optimal if and only if $v_\sigma(x) = \max_{\tau \in
    \Sigma} v_\tau(x)$ for all $x \in \Xsf$.  This is the standard definition of
    optimality of MDP policies (see, e.g., \cite{puterman2005markov}, Ch.~6).
\end{example}

We say that \navy{Bellman's principle of optimality holds}  if $V_\Sigma$ has a
greatest element $v^*$ and, for $\sigma \in \Sigma$,
\begin{equation}\label{eq:bpo}
    \text{$\sigma$ is optimal } \iff 
    \text{ $\sigma$ is $v^*$-greedy}.
\end{equation}

\begin{example}
    The ADP generated by the MDP model in
    Section~\ref{ss:mdps} satisfies Bellman's principle of optimality.
    See, for example, \cite{bertsekas2022abstract}, Lemma~2.1.1 (c).
\end{example}

We say that the \navy{fundamental ADP optimality results} hold for $(V, \TT)$ if
\begin{enumerate}
    \item[(B1)] $V_\Sigma$ has a greatest element $v^*$, 
    \item[(B2)] $v^*$ is the unique solution to the Bellman equation in $V$, and
    \item[(B3)] Bellman's principle of optimality holds. 
\end{enumerate}

When (B1) holds we call the greatest element $v^*$ the \navy{value function}. 
Clearly (B1) is equivalent to the statement that at least one optimal policy
exists.

Properties (B1)--(B3) are not independent, as the next lemma shows.
 
\begin{lemma}\label{l:fo}
    If $V_\Sigma$ has greatest element $v^*$, then 
    $v^*$ satisfies the Bellman equation if and only if
    Bellman's principle of optimality holds.
\end{lemma}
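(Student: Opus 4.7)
The plan is to verify both directions of the biconditional, noting that the hypothesis already supplies the greatest element $v^*$ of $V_\Sigma$. Two identities will do most of the work: first, $v^* \in V_\Sigma$ supplies some policy $\sigma^*$ with $T_{\sigma^*} v^* = v^*$ and $v_{\sigma^*} = v^*$; second, any optimal $\sigma$ satisfies $v_\sigma = v^*$, because $v_\sigma \in V_\Sigma$ attains the greatest element. Lemma~\ref{l:torper}(ii) will serve as the bridge between the Bellman equation $T v^* = v^*$ and $v^*$-greediness of a particular policy.

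For the $(\Leftarrow)$ direction, I would pick $\sigma^*$ as above, observe that $\sigma^*$ is optimal (its lifetime value equals the greatest element), invoke the assumed principle to conclude $\sigma^*$ is $v^*$-greedy, and then apply Lemma~\ref{l:torper}(ii) to get $T v^* = T_{\sigma^*} v^* = v_{\sigma^*} = v^*$. For the $(\Rightarrow)$ direction, I would start from $T v^* = v^*$, which in particular places $v^*$ in $V_G$. For optimal $\sigma$, the chain $T_\sigma v^* = T_\sigma v_\sigma = v_\sigma = v^* = T v^*$ together with Lemma~\ref{l:torper}(ii) identifies $\sigma$ as $v^*$-greedy. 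For $v^*$-greedy $\sigma$, the equation $T_\sigma v^* = T v^* = v^*$ exhibits $v^*$ as a fixed point of $T_\sigma$; the uniqueness inherent in the notation $v_\sigma$ then forces $v_\sigma = v^*$, so $\sigma$ is optimal.

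The main obstacle I anticipate is simply bookkeeping around when $v_\sigma$ is well-defined: the definition of ``optimal'' and the notation $v_\sigma$ tacitly assume $T_\sigma$ has a unique fixed point, and this uniqueness is what converts ``$v^*$ is \emph{a} fixed point of $T_\sigma$'' into ``$v_\sigma = v^*$'' in the $v^*$-greedy $\Rightarrow$ optimal step. Once that point is made explicit, each implication is a one-line consequence of Lemma~\ref{l:torper}(ii) combined with the identity $T_\sigma v_\sigma = v_\sigma$.
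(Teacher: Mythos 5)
Your proposal is correct and follows essentially the same route as the paper's proof: both directions hinge on Lemma~\ref{l:torper}(ii) together with the identities $T_\sigma v_\sigma = v_\sigma$ and $v_\sigma = v^*$ for optimal $\sigma$, and your explicit remark that well-posedness (uniqueness of the fixed point $v_\sigma$) is what drives the ``$v^*$-greedy $\Rightarrow$ optimal'' step matches the paper's invocation of that same uniqueness. No gaps.
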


\begin{proof}
    Let $V_\Sigma$ have greatest element $v^*$. Suppose first that $Tv^*=v^*$.
    Fixing $\sigma \in \Sigma$, we claim that that \eqref{eq:bpo} holds.
    As for $\Rightarrow$, if $\sigma \in \Sigma$ is optimal, then $v_\sigma =
    v^*$. Since $T_\sigma \, v_\sigma = v_\sigma$, this implies $T_\sigma \, v^*
    = v^*$. But $T \, v^* = v^*$, so $T_\sigma \, v^* = T v^*$.  It follows
    that $\sigma$ is $v^*$-greedy (by Lemma~\ref{l:torper}).
    As for $\Leftarrow$, if $\sigma$ is $v^*$-greedy, then $T_\sigma \, v^* = T
    v^* = v^*$.   But $v_\sigma$ is the unique fixed point of $T_\sigma$ in $V$, so
    $v_\sigma = v^*$.  Hence $\sigma$ is an optimal policy.  
    As for the converse implication, the definition of greatest elements
    implies existence of a $\sigma \in \Sigma$ such that $v_\sigma =
    v^*$.   By Bellman's principle of optimality, the policy $\sigma$ is
    $v^*$-greedy.  As a result, $T v^* = T_\sigma \, v^* = T_\sigma \, v_\sigma
    = v_\sigma = v^*$.  
\end{proof}

\subsection{Algorithms}

Let $(V, \TT)$ be a regular well-posed ADP with Bellman operator $T$ and
$\sigma$-lifetime value functions $V_\Sigma$.  Suppose that the fundamental
optimality properties (B1)--(B3) hold and let $v^*$ denote the value function.
We consider three major algorithms for computing $v^*$: value function iteration, optimistic policy
iteration and Howard policy iteration. To this end, we define the \navy{Howard policy operator} $H \colon V \to
V_\Sigma$ corresponding to $(V, \TT)$ via
$H v = v_\sigma$  where $\sigma$ is $v$-greedy.
So that $H$ is well-defined, we always select the same $v$-greedy policy when
applying $H$ to $v$. 
Also, fixing arbitrary $m \in \NN$, we define the \navy{optimistic policy
operator} $W_m$ from $V_G$ to $V$ via 
$W_m \, v \coloneq T^m_\sigma v$ where $\sigma$  is $v$-greedy.
As was the case for $H$, we always select a fixed $v$-greedy policy when
applying $W_m$ to $v$. We say that 
\begin{itemize}
    \item \navy{value function iteration (VFI) converges} if $T^n v \uparrow v^*$ for all $v \in V_U$,
    \item \navy{Howard policy iteration (HPI) converges} if $H^n v \uparrow v^*$ for
        all $v \in V_U$, and 
    \item \navy{optimistic policy iteration (OPI) converges} if $W_m^n v
        \uparrow v^*$ for all $v \in V_U$.
\end{itemize}

\section{Optimality Results}\label{s:opres}\label{ss:sr}

In this section we present our main theoretical results.
Proofs are deferred to Section~\ref{s:ir}.
\emph{Throughout this
section, $(V, \TT)$ is a regular well-posed ADP}.  
First,  we state a high-level result that assumes the existence of a fixed point
for the Bellman operator. 

\begin{theorem}\label{t:bk}
    If $(V, \TT)$ is downward stable and $T$ has at least
    one fixed point in $V$, then the fundamental ADP optimal results hold.
\end{theorem}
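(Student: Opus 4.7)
The plan is to exhibit an explicit candidate for the value function and verify the three properties (B1)--(B3) in order, using the fixed point of $T$ provided by the hypothesis as the bridge between the Bellman operator and the family of $\sigma$-values.

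Let $\bar v \in V$ satisfy $T \bar v = \bar v$. First I would show that $\bar v \in V_\Sigma$, that is, $\bar v = v_\sigma$ for some $\sigma \in \Sigma$. By regularity of $(V, \TT)$, there exists a $\bar v$-greedy policy $\sigma$, and Lemma~\ref{l:torper}(ii) gives $T_\sigma \, \bar v = T \bar v = \bar v$. Well-posedness then forces $\bar v$ to be the unique fixed point $v_\sigma$ of $T_\sigma$.

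Next I would establish (B1) by showing that this $\bar v = v_\sigma$ is in fact the greatest element of $V_\Sigma$. For arbitrary $\tau \in \Sigma$, the definition of $T$ as a supremum yields $T_\tau \, \bar v \preceq T \bar v = \bar v$, so by downward stability of $T_\tau$ we get $v_\tau \preceq \bar v$. Hence $v^* := \bar v$ is a greatest element of $V_\Sigma$, and in particular $\sigma$ is an optimal policy. For (B2), I already have $T v^* = v^*$ by construction; to get uniqueness, note that the same argument applied to any other fixed point $\tilde v$ of $T$ shows that $\tilde v$ is also a greatest element of $V_\Sigma$, and greatest elements are unique, so $\tilde v = v^*$. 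Finally, (B3) follows immediately from Lemma~\ref{l:fo}, since $V_\Sigma$ has greatest element $v^*$ and $v^*$ solves the Bellman equation.

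There is no substantial obstacle in this argument: the entire proof is a short chain of definitions, with the only nonroutine step being the observation that each individual $T_\tau$ satisfies the inequality $T_\tau \, \bar v \preceq \bar v$ needed to invoke downward stability---which is automatic because $T \bar v$ is the supremum of $\{T_\tau \, \bar v\}_{\tau \in \Sigma}$. The role of the three hypotheses is also transparent: regularity guarantees that a greedy policy exists to identify $\bar v$ with some $v_\sigma$; well-posedness converts the greedy identity $T_\sigma \, \bar v = \bar v$ into $\bar v = v_\sigma$; and downward stability upgrades the pointwise inequality $T_\tau \, \bar v \preceq \bar v$ into the dominance $v_\tau \preceq \bar v$ that is needed for optimality.
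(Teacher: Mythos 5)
Your proposal is correct and follows essentially the same route as the paper's own proof: identify the fixed point $\bar v$ of $T$ with $v_\sigma$ for a $\bar v$-greedy $\sigma$ via regularity and well-posedness, use $T_\tau\,\bar v \preceq T\bar v = \bar v$ plus downward stability to show $\bar v$ is the greatest element of $V_\Sigma$, and then invoke Lemma~\ref{l:fo} for Bellman's principle. Your explicit argument for the uniqueness half of (B2) --- that any fixed point of $T$ is a greatest element of $V_\Sigma$ and greatest elements are unique --- is a detail the paper leaves implicit, and it is a welcome addition.
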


Now we drop the assumption that $T$ has a fixed point and suppose instead that
$V$ has some form of completeness. 

\begin{theorem}\label{t:bkn}
    If the value space $V$ is chain complete, then the fundamental
    ADP optimal results hold.
\end{theorem}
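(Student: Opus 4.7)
The plan is to reduce Theorem~\ref{t:bkn} to Theorem~\ref{t:bk} by verifying its two hypotheses under chain completeness of $V$: namely, that the Bellman operator $T$ has a fixed point in $V$, and that $(V, \TT)$ is downward stable. Once both are in hand, Theorem~\ref{t:bk} immediately delivers (B1)--(B3).

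First, I would promote $T$ to a genuine order preserving self-map on all of $V$. Because $(V, \TT)$ is regular, every $v \in V$ admits a $v$-greedy policy, so $V_G = V$. Lemma~\ref{l:torper}(i) then says that $T$ is well-defined and order preserving on $V$, and part (ii) of the same lemma identifies $Tv$ with $T_\sigma \, v$ for any $v$-greedy $\sigma$, which confirms $Tv \in V$. Since $V$ is chain complete and $T$ is order preserving, Theorem~\ref{t:tk}(i) produces a fixed point of $T$ in $V$.

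Next, I would establish that each policy operator is order stable, and hence in particular downward stable. Well-posedness supplies at most one fixed point for each $T_\sigma$, and by definition every $T_\sigma$ is an order preserving self-map on the chain complete set $V$. Lemma~\ref{l:ocius}(i) then yields order stability of every $T_\sigma$, so $(V, \TT)$ is (in particular) downward stable.

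With both hypotheses verified, Theorem~\ref{t:bk} applies and gives the fundamental optimality results. The only subtle point is the first step: one must ensure that the supremum defining $Tv$ really belongs to $V$ for each $v$, and this is exactly what regularity buys through Lemma~\ref{l:torper}(ii). Chain completeness then does double duty, supplying the fixed point of $T$ via Theorem~\ref{t:tk} and upgrading the uniqueness coming from well-posedness to order stability via Lemma~\ref{l:ocius}. I expect no other obstacles; the argument is essentially a bookkeeping exercise once one recognizes the two previously proved lemmas lining up with the hypotheses of Theorem~\ref{t:bk}.
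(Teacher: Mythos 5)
Your proposal is correct and follows essentially the same route as the paper: the paper also applies Lemma~\ref{l:ocius} to get order stability (hence downward stability) and Theorem~\ref{t:tk} to obtain a fixed point of $T$, then invokes Theorem~\ref{t:bk}. Your extra step of using regularity and Lemma~\ref{l:torper} to justify that $T$ is a well-defined order preserving self-map on $V$ (so that Theorem~\ref{t:tk} applies to it) is a detail the paper leaves implicit, and it is correctly handled.
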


The next result weakens chain completeness to a milder condition on the space, 
while adding continuity properties on the policy operators. 

\begin{theorem}\label{t:impliesms}
    If $(V, \TT)$ is order continuous and $V$ is countably chain complete, then 
    \begin{enumerate}
        \item the fundamental ADP optimal results hold, and
        \item VFI, OPI, and HPI all converge.
    \end{enumerate}
\end{theorem}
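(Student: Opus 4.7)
The plan is to leverage the machinery already built in the excerpt: Lemma~\ref{l:ocius} converts order continuity into order stability, Lemma~\ref{l:ostst} lifts order continuity from $\TT$ to $T$, and Theorem~\ref{t:bk} then delivers (i) once we know $T$ has a fixed point. Algorithmic convergence in (ii) will follow from a standard ``sandwich'' argument driven by order continuity.

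\textbf{Step 1: Reduce to Theorem~\ref{t:bk}.} Each $T_\sigma$ is order continuous on the countably chain complete space $V$ and, by well-posedness, has a unique fixed point $v_\sigma \in V$. Lemma~\ref{l:ocius}(ii) therefore shows $T_\sigma$ is order stable. Hence the ADP $(V,\TT)$ is order stable, and in particular downward stable. By Lemma~\ref{l:ostst}, $T$ is order continuous on $V$. Since $V$ is countably chain complete, it has a least element $\bot$, and $\bot \preceq T\bot$ trivially, so Theorem~\ref{t:tk}(ii) yields a fixed point $\bigvee_n T^n \bot \in \fix(T)$. Now applying Theorem~\ref{t:bk} gives (B1)--(B3). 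Let $v^*$ denote the resulting value function.

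\textbf{Step 2: VFI convergence.} Fix $v \in V_U$. Since $T$ is order preserving and $v \preceq Tv$, the sequence $(T^n v)$ is increasing. Its supremum $\bar v := \bigvee_n T^n v$ exists in $V$ by countable chain completeness. Order continuity of $T$ then gives $T\bar v = \bigvee_n T^{n+1} v = \bar v$, and by (B2) we conclude $\bar v = v^*$.

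\textbf{Step 3: HPI convergence.} For $v \in V_U$, I would first observe that $H^n v \in V_\Sigma \subset V_U$ for all $n \geq 1$ by Lemma~\ref{l:vsigvu}, so we may iterate. Let $\sigma_n \in \Sigma$ be the $H^n v$-greedy policy used to define $H^{n+1} v = v_{\sigma_n}$. Then $T_{\sigma_n}(H^n v) = T(H^n v) \succeq H^n v$, so upward stability of $T_{\sigma_n}$ gives $H^n v \preceq v_{\sigma_n} = H^{n+1} v$; thus $(H^n v)$ is increasing. Moreover, by monotonicity of $T_{\sigma_n}$,
\begin{equation*}
    H^n v \;\preceq\; T(H^n v) \;=\; T_{\sigma_n}(H^n v) \;\preceq\; T_{\sigma_n}(v_{\sigma_n}) \;=\; H^{n+1} v.
\end{equation*}
Taking the supremum over $n$, writing $\hat v := \bigvee_n H^n v$, and invoking order continuity of $T$, we obtain $\hat v \preceq T\hat v = \bigvee_n T(H^n v) \preceq \bigvee_n H^{n+1} v = \hat v$. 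Hence $T\hat v = \hat v$, and (B2) yields $\hat v = v^*$.

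\textbf{Step 4: OPI convergence.} The argument mirrors HPI. By induction, $W_m^n v \in V_U$: given $W_m^n v \in V_U$ with $\sigma_n$ a corresponding greedy policy, $W_m^n v \preceq T_{\sigma_n}(W_m^n v)$, so $(T_{\sigma_n}^k W_m^n v)_k$ is increasing, giving $W_m^{n+1} v = T_{\sigma_n}^m W_m^n v \preceq T_{\sigma_n}^{m+1} W_m^n v \preceq T(W_m^{n+1} v)$. In particular $(W_m^n v)$ is increasing, and the same chain of inequalities gives
\begin{equation*}
    W_m^n v \;\preceq\; T(W_m^n v) \;=\; T_{\sigma_n}(W_m^n v) \;\preceq\; T_{\sigma_n}^m(W_m^n v) \;=\; W_m^{n+1} v.
\end{equation*}
Taking the supremum and using order continuity of $T$ as before shows $\tilde v := \bigvee_n W_m^n v$ is a fixed point of $T$, hence equal to $v^*$.

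The main obstacle is Step 3/4: one must verify carefully that $H^n v$ and $W_m^n v$ stay in $V_U$ to keep iterating, and that each iterate dominates $T$ applied to the previous iterate so the passage to the limit is pinched between two copies of $\bigvee_n$. Once this bracketing is in place, order continuity of $T$ (Lemma~\ref{l:ostst}) and uniqueness of the Bellman fixed point (from Step 1) close the argument routinely.
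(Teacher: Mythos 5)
Your proof is correct. Steps 1 and 2 coincide with the paper's argument: Lemma~\ref{l:ocius} for order stability, Lemma~\ref{l:ostst} plus Theorem~\ref{t:tk} for a fixed point of $T$, Theorem~\ref{t:bk} for (B1)--(B3), and then the standard order-continuity argument identifying $\bigvee_n T^n v$ with $v^*$ via (B2). Where you diverge is in Steps 3 and 4: the paper dispatches HPI and OPI in one line by citing Corollary~\ref{c:cl2}, which sandwiches $T^n v \preceq W_m^n v \preceq v^*$ and $T^n v \preceq H^n v \preceq v^*$ (using upward stability via Lemma~\ref{l:cl2}) and then lets VFI convergence force the other two suprema to equal $v^*$. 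You instead argue directly: you show each iterate stays in $V_U$ (re-deriving (L2) of Lemma~\ref{l:meta} for $W_m$), establish the bracketing $H^n v \preceq T(H^n v) \preceq H^{n+1}v$ and its OPI analogue, and pass to the supremum using order continuity of $T$ to conclude the limit is a fixed point of $T$, hence $v^*$ by (B2). Both routes are valid under the theorem's hypotheses. The trade-off is that your argument leans on order continuity of $T$ for the HPI/OPI limits, whereas the corollary-based squeeze needs only upward stability once VFI convergence is in hand, which is why the paper can reuse Corollary~\ref{c:cl2} verbatim in the proof of Theorem~\ref{t:dede}; on the other hand, your version is self-contained and does not require first proving the comparison inequalities \eqref{eq:rank}.
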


In the previous two theorems, it is assumed that $V$ is order bounded.
The next two theorems drop this assumption.
In the first, we state a result for the case where $(V, \TT)$ is finite, which is
relatively common in applications.  

\begin{theorem}\label{t:bkf}
    If $(V, \TT)$ is finite and order stable, then 
    \begin{enumerate}
        \item the fundamental ADP optimal results hold, and
        \item HPI converges in finitely many steps.
    \end{enumerate}
\end{theorem}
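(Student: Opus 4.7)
The plan is to analyze Howard policy iteration directly: I will show it produces an increasing sequence in the finite set $V_\Sigma$ that must stabilize at a fixed point of the Bellman operator $T$, and then invoke Theorem~\ref{t:bk} to upgrade this fixed point into the fundamental optimality results.

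First, I would observe that $V_U$ is nonempty: regularity and well-posedness make $V_\Sigma$ nonempty, and Lemma~\ref{l:vsigvu} gives $V_\Sigma \subset V_U$. Taking any $v_0 \in V_U$, form the HPI iterates $v_{n+1} = H v_n$ and let $\sigma_n$ denote the $v_n$-greedy policy chosen by $H$. To show the sequence is increasing: since $v_0 \in V_U$, we have $v_0 \preceq T v_0 = T_{\sigma_0} v_0$, and upward stability of $T_{\sigma_0}$ then yields $v_0 \preceq v_{\sigma_0} = v_1$; for $n \geq 1$ the same argument applies because $v_n = v_{\sigma_{n-1}} \in V_\Sigma \subset V_U$.

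The pivotal step is termination. For $n \geq 1$, the iterate $v_n$ lies in the finite set $V_\Sigma$, and by antisymmetry an increasing sequence in a finite poset must eventually be constant. Hence there exists $N$ with $v_N = v_{N+1} = H v_N = v_{\sigma_N}$, which gives $T v_N = T_{\sigma_N} v_N = v_{\sigma_N} = v_N$. Thus $v_N$ is a fixed point of $T$ in $V$.

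Since order stability implies downward stability, Theorem~\ref{t:bk} delivers the fundamental ADP optimality results (B1)--(B3), giving part (i). In particular, (B2) states that the value function $v^*$ is the unique fixed point of $T$ in $V$, so $v_N = v^*$. This shows $H^n v_0 = v^*$ for all $n \geq N$, which simultaneously establishes $H^n v_0 \uparrow v^*$ and that HPI terminates in finitely many steps, giving part (ii). I do not expect any serious obstacle: the plan combines upward stability for monotonicity, finiteness of $\Sigma$ for termination, and Theorem~\ref{t:bk} to convert the terminal fixed point of $T$ into full optimality.
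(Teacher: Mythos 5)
Your proposal is correct and follows essentially the same route as the paper's proof: generate the HPI sequence from a point of $V_U$, use upward stability to show it is increasing, use finiteness of $V_\Sigma$ to force stabilization at a fixed point of $T$, and then invoke Theorem~\ref{t:bk} together with uniqueness of the Bellman fixed point to identify the terminal iterate with $v^*$. The only difference is cosmetic: you re-derive inline the facts the paper delegates to Lemma~\ref{l:cl2} (monotonicity of the HPI sequence) and Lemma~\ref{l:meta}(L1) ($Hv = v$ implies $Tv = v$), and you are slightly more careful than the paper in starting the iteration from $v_0 \in V_U$ rather than an arbitrary $v \in V$.
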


Finally, we consider a setting where $(V, \TT)$ is not finite and the value space
can be unbounded.

\begin{theorem}\label{t:dede}
    Let $(V, \TT)$ be order continuous and order stable. If $V$ is
    countably Dedekind complete and $(V, \TT)$ is bounded above, then
    \begin{enumerate}
        \item the fundamental ADP optimality properties hold and
        \item VFI, OPI and HPI all converge.  
    \end{enumerate}
\end{theorem}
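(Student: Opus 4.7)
The plan is to reduce part (i) to Theorem~\ref{t:bk} by exhibiting a single fixed point of the Bellman operator $T$; once this is done, order stability of $(V, \TT)$ supplies downward stability, and Theorem~\ref{t:bk} immediately delivers (B1)--(B3). Write $v^*$ for the resulting value function and handle (ii) separately.

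To construct a fixed point of $T$, I would fix any $\sigma_0 \in \Sigma$ and set $v_0 \coloneq v_{\sigma_0}$. By Lemma~\ref{l:vsigvu} we have $v_0 \in V_U$, so $v_0 \preceq T v_0$. Downward stability of $T_{\sigma_0}$ applied to $T_{\sigma_0} u \preceq u$ gives $v_0 \preceq u$, and since $T u = \bigvee_\sigma T_\sigma u \preceq u$ and $T$ is order preserving on $V_G$ (Lemma~\ref{l:torper}), an induction shows $T^n v_0 \preceq u$ for every $n$. The sequence $(T^n v_0)$ is therefore increasing and bounded above by $u$, so by countable Dedekind completeness its supremum $\bar v$ exists in $V$. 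To verify $T \bar v = \bar v$ I would replay the proof of Lemma~\ref{l:ostst} in this bounded-above setting: the inequality $T_\sigma T^n v_0 \preceq T^{n+1} v_0 \preceq \bar v$ ensures that the supremum of $(T_\sigma T^n v_0)_n$ exists in $V$, order continuity of $T_\sigma$ identifies it as $T_\sigma \bar v$, and taking the supremum over $\sigma$ gives $T \bar v = \bigvee_n T^{n+1} v_0 = \bar v$. Theorem~\ref{t:bk} now completes part (i).

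For part (ii) the key observation is that every $v \in V_U$ satisfies $v \preceq v^*$: regularity supplies a $v$-greedy $\sigma$, and then $T_\sigma v = T v \succeq v$ combined with upward stability of $T_\sigma$ gives $v \preceq v_\sigma \preceq v^*$. VFI is then immediate, because $(T^n v)$ is increasing and bounded above by $v^*$, its supremum $\bar w$ exists by Dedekind completeness, the same calculation as above yields $T \bar w = \bar w$, and (B2) forces $\bar w = v^*$. For HPI, with $v_{n+1} = v_{\sigma_n}$ for $\sigma_n$ selected $v_n$-greedy, I would show by induction that $v_n \in V_U$ and establish the sandwich $v_n \preceq T v_n \preceq v_{n+1}$; the upper estimate holds because upward stability of $T_{\sigma_n}$ applied to $T_{\sigma_n} v_n = T v_n \succeq v_n$ yields $v_n \preceq v_{\sigma_n}$, whence $T v_n = T_{\sigma_n} v_n \preceq T_{\sigma_n} v_{\sigma_n} = v_{n+1}$. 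The sandwich forces $T v_n$ and $v_n$ to share the supremum $\bar v$, while the order-continuity calculation above also gives $T v_n \uparrow T \bar v$; hence $T \bar v = \bar v$ and uniqueness yields $\bar v = v^*$. OPI is handled by the same template with $v_{n+1} = T_{\sigma_n}^m v_n$ and the estimate $T v_n = T_{\sigma_n} v_n \preceq T_{\sigma_n}^m v_n = v_{n+1}$.

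The main obstacle is the absence of a greatest element in $V$: every supremum used in the argument must be justified by producing a concrete upper bound. The twin roles of $u$ (bounding the Bellman iterates launched from $v_0$ when constructing the fixed point) and $v^*$ (bounding every $V_U$-iterate once (i) is in hand, via upward stability) are what make countable Dedekind completeness sufficient. In particular Lemma~\ref{l:ostst} cannot be invoked verbatim, since its proof relies on countable chain completeness of $V$ to secure the existence of $\bigvee_n T v_n$; here that existence must be re-derived from a concrete upper bound in each instance.
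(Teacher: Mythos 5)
Your proposal is correct and follows essentially the same route as the paper: construct a fixed point of $T$ by iterating on an element of $V_U$, bound the iterates by the element $u$ supplied by the bounded-above hypothesis, extract the supremum via countable Dedekind completeness, verify it is fixed using order continuity of the policy operators (rather than invoking Lemma~\ref{l:ostst} verbatim, exactly as you caution), and then invoke Theorem~\ref{t:bk}. The only divergence is minor: for OPI and HPI the paper cites Corollary~\ref{c:cl2} (the sandwich $T^n v \preceq W^n v \preceq v^*$ combined with VFI convergence), whereas you re-derive the sandwich $v_n \preceq T v_n \preceq v_{n+1}$ and show directly that the supremum of each iterate sequence is a fixed point of $T$ --- both arguments are valid.
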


\section{Proofs of Section~\ref{ss:sr} Results}\label{s:ir}

In this section we prove  optimality results from Section~\ref{ss:sr}.
Throughout this section, $(V, \TT)$ is a regular well-posed ADP with Howard
policy operator $H$, optimistic policy operator $W$ and Bellman operator $T$.

\subsection{Preliminaries}

We begin with some lemmas.

\begin{lemma}\label{l:meta}
    The following statements hold. 
    \begin{enumerate}
        \item[\rm{(L1)}] If $v \in V$ with $Hv=v$, then $Tv=v$.
        \item[\rm{(L2)}] The operators $T, W$ and $H$ all map $V_U$ to itself.
        \item[\rm{(L3)}] If $v \in V_U$, then $T v \preceq W v \preceq T^m v$.
    \end{enumerate}
\end{lemma}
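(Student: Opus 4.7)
The plan is to lean on two previously established facts. First, regularity gives $V_G = V$, so Lemma~\ref{l:torper} tells us that $T$ is well-defined and order preserving on all of $V$ and that $\sigma$ is $v$-greedy if and only if $T_\sigma v = T v$. Second, the definition $T w = \bigvee_{\tau} T_\tau w$ immediately yields the pointwise bound $T_\sigma w \preceq T w$ for every $\sigma$ and every $w \in V$. Everything below is bookkeeping with these two facts together with Lemma~\ref{l:vsigvu}.

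For (L1), I would just unwind the definition of $H$: if $H v = v$, then $v = v_\sigma$ for the distinguished $v$-greedy policy $\sigma$, so $T_\sigma v = v_\sigma = v$, and Lemma~\ref{l:torper}(ii) delivers $T v = T_\sigma v = v$.

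For (L2), I would handle the three operators in turn, fixing $v \in V_U$. That $T v \in V_U$ is immediate from order preservation of $T$: $v \preceq T v$ forces $T v \preceq T(T v)$. That $H v \in V_U$ is in fact free for every $v \in V$, since $H v = v_\sigma \in V_\Sigma \subset V_U$ by Lemma~\ref{l:vsigvu}. For $W v = T_\sigma^m v$ with $\sigma$ the chosen $v$-greedy policy, I would first observe $T_\sigma v = T v \succeq v$ and then iterate the order-preserving $T_\sigma$ to conclude that $(T_\sigma^k v)_{k \geq 0}$ is increasing. Combining $T_\sigma^{m+1} v \succeq T_\sigma^m v$ with $T_\sigma(T_\sigma^m v) \preceq T(T_\sigma^m v)$ then produces $W v \preceq T(W v)$.

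For (L3), with the same $\sigma$, the lower bound reduces to $T v = T_\sigma v \preceq T_\sigma^m v = W v$, which is again the monotonicity of the iterates established in (L2). The upper bound $W v \preceq T^m v$ I would prove by induction on $k$, showing $T_\sigma^k v \preceq T^k v$ for $1 \leq k \leq m$: the base case is the greedy equality $T_\sigma v = T v$, and the step chains order preservation of $T_\sigma$, the inequality $T_\sigma w \preceq T w$, and order preservation of $T$ to pass from $T_\sigma^k v \preceq T^k v$ via $T_\sigma^{k+1} v \preceq T_\sigma(T^k v) \preceq T(T^k v) = T^{k+1} v$. None of the three parts presents a real obstacle; the one point requiring care is in (L2), where one must invoke the full chain $T_\sigma^m v \preceq T_\sigma^{m+1} v \preceq T(T_\sigma^m v)$ and not the greedy identity alone, which would only cover the case $m = 1$.
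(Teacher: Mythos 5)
Your proposal is correct and follows essentially the same route as the paper's proof: (L1) by unwinding $Hv=v_\sigma$ and the greedy identity $T_\sigma v = Tv$, (L2) via order preservation of $T$, the monotone iterates of $T_\sigma$, and $V_\Sigma \subset V_U$ from Lemma~\ref{l:vsigvu}, and (L3) from $T_\sigma \preceq T$ plus order preservation. The only cosmetic difference is in (L2) for $W$, where you chain $T_\sigma^m v \preceq T_\sigma^{m+1} v \preceq T(T_\sigma^m v)$ while the paper inserts $Tv$ inside the iterate; both rest on the same facts.
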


\begin{proof}
    As for (L1), fix $v \in V$ with $H \, v = v$ and let $\sigma$ be a
    $v$-greedy policy such that $H \, v = v_\sigma$.  Then $v_\sigma = H \, v = v$. Since
    $\sigma$ is $v$-greedy, $T_\sigma \, v = T \, v$.   Since $v_\sigma$ is
    fixed for $T_\sigma$, we also have $T_\sigma \, v  = v$.  Combining the last
    two equalities proves (L1).

    As for (L2), fix $v \in V_U$.  Since $v \preceq Tv$ and $T$ is order preserving on $V_U$,
    we have $Tv \preceq TTv$.  Hence $Tv \in V_U$.  As for $W$, let
    $\sigma$ be $v$-greedy with $W = T_\sigma^m v$. Since $T$ and $T_\sigma$
    are order preserving and $v \preceq Tv$, 
    we have $W v = T_\sigma T_\sigma^{m-1} v
    \preceq T T_\sigma^{m-1} v \preceq T T_\sigma^{m-1} T v = T T_\sigma^m v =
    TW v$. Hence $W v \in V_U$. Finally,
    regarding $H$, we observe that $Hv \in V_\Sigma$ and, by
    Lemma~\ref{l:vsigvu}, $V_\Sigma \subset V_U$.

    To prove (L3) we fix $v \in V_U$.  Letting
    $\sigma$ be $v$-greedy, we have $v \preceq Tv = T_\sigma \, v$. Iterating on
    this inequality with $T_\sigma$ proves that $(T_\sigma^k \, v)$ is
    increasing. In particular, $Tv = T_\sigma \, v \preceq W v$. 
    For the second inequality in (L3) we use the fact
    that $T_\sigma \preceq T$ on $V$ and $T$ and $T_\sigma$ are both order
    preserving to obtain $W v = T^m_\sigma v \preceq T^m  v$.
\end{proof}

The next lemma adds upward stability and derives additional implications.

\begin{lemma}\label{l:cl2}
    If $(V, \TT)$ is upward stable, then for every $v \in V_U$,
    \begin{equation}\label{eq:rank}
        T^n v \preceq W^n v 
        \qquad \text{and} \qquad
        T^n v \preceq H^n v
    \end{equation}
    for all $n \in \NN$.
    Moreover, the VFI sequence $(T^n v)$, the HPI sequence $(H^n v)$ and the OPI
    sequence $(W^n v)$ are all increasing. 
\end{lemma}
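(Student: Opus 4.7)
My plan is to reduce the lemma to two base inequalities at $n=1$ and then bootstrap by induction. Upward stability will play a precise role: it is what converts the one-step Bellman inequality $v \preceq T_\sigma v$ into the fixed-point comparison $v \preceq v_\sigma$, and this is what makes both the comparison of $Tv$ with $Hv$ and the monotonicity of the Howard sequence work.

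The base case $Tv \preceq Wv$ for $v \in V_U$ is already contained in (L3) of Lemma~\ref{l:meta}. For the other base case, $Tv \preceq Hv$, I would choose a $v$-greedy $\sigma$ so that $Tv = T_\sigma v$ and $Hv = v_\sigma$, apply upward stability to $v \preceq T_\sigma v$ to obtain $v \preceq v_\sigma$, and then use monotonicity of $T_\sigma$ once more to conclude $Tv = T_\sigma v \preceq T_\sigma v_\sigma = v_\sigma = Hv$.

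For the induction step, regularity forces $V_G = V$, so $T$ is order preserving on all of $V$ by Lemma~\ref{l:torper}(i), while (L2) of Lemma~\ref{l:meta} ensures $H$ and $W$ map $V_U$ to itself. Assuming $T^n v \preceq H^n v$, I would then write
\begin{equation*}
    T^{n+1} v = T(T^n v) \preceq T(H^n v) \preceq H(H^n v) = H^{n+1} v,
\end{equation*}
where the first inequality combines the inductive hypothesis with order preservation of $T$, and the second is the $n=1$ case applied at the point $H^n v \in V_U$. The argument with $W$ in place of $H$ is structurally identical.

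The increasing-sequence claim falls out by similar moves. For $(T^n v)$ it is immediate from iterating (L2). For $(W^n v)$, I would pick a $W^n v$-greedy $\tau$ and iterate the order-preserving $T_\tau$ up to its $m$-th power, starting from $W^n v \preceq T_\tau(W^n v)$, to obtain $W^n v \preceq T_\tau^m(W^n v) = W^{n+1} v$. For $(H^n v)$, I would pick an $H^n v$-greedy $\tau$ and invoke upward stability of $T_\tau$ once on $H^n v \preceq T_\tau(H^n v)$ to obtain $H^n v \preceq v_\tau = H^{n+1} v$. The only subtlety I expect to watch throughout is that $W$ and $H$ are \emph{not} themselves asserted to be order preserving --- each is defined via a selection of a greedy policy --- so all monotonicity in the induction must be routed through $T$, with $W$ and $H$ entering only at the endpoints of each step through the base inequalities.
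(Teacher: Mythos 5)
Your proposal is correct and follows essentially the same route as the paper: the paper's key display $v \preceq Tv = T_\sigma v \preceq T_\sigma^m v = Wv \preceq v_\sigma = Hv$ (using upward stability for the last comparison) is exactly your base case plus the increasing-sequence claims, and the paper's auxiliary implication ``$u \preceq v$ in $V_U$ implies $Tu \preceq Wv$ and $Tu \preceq Hv$'' is your induction step with the monotonicity routed through $T$. Your closing remark that $W$ and $H$ are not themselves order preserving, so that all comparisons must pass through $T$, is precisely the point the paper's argument is built around.
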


\begin{proof}
    Our first claim is that 
    \begin{equation}\label{eq:uv}
        \text{$u,v \in V_U$ with $u \preceq v$}
        \implies
        Tu \preceq Wv \; \text{ and } Tu \preceq Hv.
    \end{equation}
    To show this we fix such $u, v$ and take $\sigma$ to be $v$-greedy.  Let
    $v_\sigma$ be the $\sigma$-value function, so that $T_\sigma \, v_\sigma =
    v_\sigma$ and $v_\sigma = H v$.  Since $v \in V_U$ we have
    \begin{equation}\label{eq:vhv}
        v 
        \preceq Tv 
        = T_\sigma \, v 
        \preceq T^m_\sigma \, v 
        = W v 
        \preceq v_\sigma
        = Hv.
    \end{equation}
    The second inequality is by iterating on $v \preceq T_\sigma \, v$, while
    the third is by upward stability.   Since $Tu \preceq Tv$, we can use
    \eqref{eq:vhv} to obtain \eqref{eq:uv}.  Iterating on \eqref{eq:uv} produces
    \eqref{eq:rank}.  The last claim in Lemma~\ref{l:cl2} follows from
    \eqref{eq:vhv}, which tells us that elements of $V_U$ are mapped up by $T$,
    $W$, and $H$.
\end{proof}

\begin{corollary}\label{c:cl2}
   If $(V, \TT)$ is upward stable and an optimal policy exists,  
   then convergence of VFI implies convergence of OPI and convergence of HPI.
\end{corollary}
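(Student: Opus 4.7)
The plan is to sandwich the OPI and HPI sequences between the VFI sequence (from below) and the optimal value $v^*$ (from above), then apply convergence of the VFI iterates together with monotonicity of the OPI and HPI iterates from Lemma~\ref{l:cl2}.

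First I would extract from the hypothesis that VFI converges a few useful consequences. Since every $v \in V_U$ satisfies $v \preceq Tv \preceq T^n v \preceq v^*$ and $T^n v \uparrow v^*$, we get $v \preceq v^*$ for every $v \in V_U$. Moreover, by Lemma~\ref{l:vsigvu}, the greatest element $v^*$ of $V_\Sigma$ lies in $V_U$, so $T^n v^* \uparrow v^*$ forces $T v^* = v^*$. Regularity also ensures $V_G = V$, so by Lemma~\ref{l:torper} the Bellman operator $T$ is order preserving on all of $V$.

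Next I would show $W^n v \preceq v^*$ and $H^n v \preceq v^*$ for all $n \in \NN$ and every $v \in V_U$. For the HPI sequence this is almost immediate: $H v = v_\sigma$ for some policy $\sigma$, so $H^n v \in V_\Sigma$ for all $n \geq 1$, and hence $H^n v \preceq v^*$ because $v^*$ is greatest in $V_\Sigma$. For OPI I would argue inductively. If $u \in V_U$ with $u \preceq v^*$ and $\sigma$ is $u$-greedy, then $T_\sigma u = T u \preceq T v^* = v^*$; applying $T_\sigma$ again and using $T_\sigma \preceq T$ pointwise on $V$ together with $T v^* = v^*$, a short induction gives $T_\sigma^k u \preceq v^*$ for every $k$, hence $W u \preceq v^*$. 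Since $W$ maps $V_U$ to $V_U$ by (L2), this induction pushes through to yield $W^n v \preceq v^*$ for all $n$.

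Finally, the sandwich argument closes the corollary. By Lemma~\ref{l:cl2}, $(W^n v)$ and $(H^n v)$ are increasing and $T^n v \preceq W^n v$, $T^n v \preceq H^n v$. If $u$ is any upper bound of $(W^n v)$, then $T^n v \preceq u$ for all $n$, and since $\bigvee_n T^n v = v^*$ we obtain $v^* \preceq u$; combined with $W^n v \preceq v^*$ for all $n$, this shows $\bigvee_n W^n v = v^*$, i.e.\ $W^n v \uparrow v^*$. The identical argument applies to $(H^n v)$.

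The only subtle step is the induction showing $W^n v \preceq v^*$: one must juggle the fact that the $v$-greedy policy used inside $W$ changes each time we re-apply $W$, which is why I handle it as a single-step implication ``$u \in V_U$ and $u \preceq v^*$ imply $W u \in V_U$ and $W u \preceq v^*$'' and iterate. Everything else is a direct sandwich using Lemma~\ref{l:cl2} and the hypothesis that $T^n v \uparrow v^*$.
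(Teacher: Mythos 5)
Your proof is correct and follows essentially the same sandwich argument as the paper: bound $T^n v \preceq W^n v \preceq v^*$ (resp.\ $H^n v$) using Lemma~\ref{l:cl2} for the lower bound and monotonicity, then pass to suprema using $T^n v \uparrow v^*$. The only difference is minor: the paper obtains the upper bound directly from the chain \eqref{eq:vhv} ($Wv \preceq Hv = v_\sigma \preceq v^*$, via upward stability), whereas you re-derive $Tv^* = v^*$ from VFI convergence and propagate it by induction on $T_\sigma^k u$ --- both routes are valid.
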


\begin{proof}
    Assume the conditions of the corollary and fix $v \in V_U$.
    Since an optimal policy exists, $v^*$ exists and is the greatest element of $V_\Sigma$. 
    Lemma~\ref{l:cl2} yields $v \preceq T^n v \preceq W^n v \preceq v^*$ for all
    $n$, where the last inequality follows from \eqref{eq:vhv} and the fact that
    $v_\sigma \preceq v^*$ for all $\sigma$.  Hence convergence of VFI implies
    convergence of OPI.  The proof for HPI is similar.
\end{proof}

\subsection{Remaining Proofs}

We now prove the main optimality results from Section~\ref{ss:sr}. 

\begin{proof}[Proof of Theorem~\ref{t:bk}]
    Let $(V, \TT)$ be downward stable and suppose that $T$ has at least one
    fixed point $\bar v$ in $V$. Since $(V, \TT)$ is regular, there exists a
    $\sigma \in \Sigma$ with $T_\sigma \, \bar v = T \bar v$
    (Lemma~\ref{l:torper}).  Since $(V, \TT)$ is well-posed, this last equality
    and $T \bar v = \bar v$ imply that $\bar v$ is the unique fixed point of
    $T_\sigma$.  Thus, $\bar v \in V_\Sigma$.  Moreover, if $\tau \in \Sigma$,
    then $T_\tau \, \bar v \preceq T \bar v = \bar v$, so, by downward
    stability, $v_\tau \preceq \bar v$.  Hence $\bar v$ is
    both the greatest element of $V_\Sigma$ and a solution to the Bellman
    equation in $V$. The fundamental ADP optimality properties now follow from 
    Lemma~\ref{l:fo}.
\end{proof}

\begin{proof}[Proof of Theorem~\ref{t:bkn}]
    Let $V$ be chain complete. $(V, \TT)$ is order stable by Lemma~\ref{l:ocius} and, by
    Theorem~\ref{t:tk}, $T$ has at least one fixed point in $V$.
    Hence the conditions of Theorem~\ref{t:bk} hold, which
    implies the fundamental ADP optimality properties.
\end{proof}

\begin{proof}[Proof of Theorem~\ref{t:impliesms}]
    Suppose $(V, \TT)$ is order continuous and $V$ is countably chain complete.
    Since $T$ is order continuous (Lemma~\ref{l:ostst}), Theorem~\ref{t:tk}
    implies that $T$ has at least one fixed point in $V$. Also, by
    Lemma~\ref{l:ocius}, $(V, \TT)$ is order stable.  Hence, by
    Theorem~\ref{t:bk}, the fundamental ADP optimality properties hold.
    Moreover, for $v \in V_U$ the sequence $v_n \coloneq T^n v$ is increasing.
    Since $T$ is order continuous, the supremum is a fixed point of 
    $T$ (Theorem~\ref{t:tk}).  But, by (B2) of the fundamental ADP optimality
    properties, the value function $v^*$ is the only fixed point of $T$ in $V$.
    Hence VFI converges.  Convergence of OPI and HPI now follows from Corollary~\ref{c:cl2}.
\end{proof}

\begin{proof}[Proof of Theorem~\ref{t:bkf}]
    Let $(V, \TT)$ be order stable and finite. Fix $v \in V$ and let $v_n = H^n
    v$ for all $n \in \NN$. By Lemma~\ref{l:cl2}, $v_n \preceq v_{n+1}$
    for all $n \in \NN$. Since $(v_n)$ is contained in the finite set
    $V_\Sigma$, it must be that $v_{n+1} = v_n$ for some $n \in \NN$. But then
    $H \, v_n =  v_n$, so, by Lemma~\ref{l:meta}, we have $v_n$ is a fixed point
    of $T$.  Hence, by Theorem~\ref{t:bk}, the fundamental ADP optimality
    properties hold.  By these same properties, the fixed point $v_n$ equals the value function $v^*$.
    Thus, we have also shown that HPI converges in finitely many steps.
\end{proof}

\begin{proof}[Proof of Theorem~\ref{t:dede}]
    In view of Theorem~\ref{t:bk}, the fundamental ADP optimality
    properties will hold when $T$ has a fixed point in $V$.  To see that this is
    true,  fix any $v \in V_U$ (which is nonempty by Lemma~\ref{l:vsigvu})
    and set $v_n \coloneq T^n v$.  Since $(V, \TT)$ is bounded above and $V$ is
    countably Dedekind complete, there exists a $\bar v \in V$ with $v_n \uparrow
    \bar v$.  We claim that $T \bar v = \bar v$.  Indeed, $v_{n+1} = T v_n
    \preceq T \bar v$ for all $n$, so, taking the supremum, $\bar v \preceq T
    \bar v$.  For the reverse inequality we take $\sigma$ to be $\bar v$-greedy
    and use order continuity of $T_\sigma$ to obtain
    \begin{equation*}
        T \bar v 
        = T_\sigma \, \bar v
        = T_\sigma \, \bigvee_n v_n
        = \bigvee_n T_\sigma \, v_n
        \preceq \bigvee_n T \, v_n
        = \bigvee_n v_{n+1}
        = \bar v.
    \end{equation*}
    The fundamental ADP optimality properties are now proved.
    In view of these properties, the only fixed point of
    $T$ in $V$ is $v^*$. Hence $T^n v = v_n \uparrow \bar v = v^*$.
    This proves convergence of VFI. Convergence of OPI and HPI follow from Corollary~\ref{c:cl2}.
\end{proof}

\section{Applications}\label{s:apps}

Next we illustrate  how the  ADP optimality results stated above can be
 applied.

\subsection{Non-EU Discrete Choice}\label{ss:noneu}

Some studies have found incompatibilities between data and predictions of
utility maximization problems
founded on additively separable preferences (see, e.g, \cite{lu2023does}).
To further this line of analysis, we return to the discrete choice 
Bellman equation in Section~\ref{ss:struct}, which was motivated by
structural estimation, while replacing ordinary conditional expectation with a
general certainty equivalent operator (so that preferences can fail to be
additively separable).  In particular,
we adopt the setting and assumptions of Section~\ref{ss:struct} while modifying the
policy operator \eqref{eq:ruts} to $(T_\sigma \, g)(x, a) = (\eE H_\sigma \,
g)(x, a)$, where
\begin{equation*}
    (H_\sigma \, g)(x') \coloneq
             \int \left\{
                 r(x', \sigma(x'), e') + \beta g(x', \sigma(x')) 
             \right\}
             \nu(\diff e')
\end{equation*}
and $\eE$ is a certainty equivalent operator mapping $\RR^\Xsf$ into $\RR^\Gsf$. This
means that $\eE$ is order preserving with respect to the pointwise order and $\eE \lambda = \lambda$ whenever
$\lambda$ is constant.  (Thus, $\eE$ is a generalization of a conditional
expectations operator.)   We call $\eE$ \navy{constant subadditive} if $\eE(f +
\lambda) \leq \eE f + \lambda$ for all $f \in \RR^\Xsf$ and $\lambda \in \RR_+$.

Let $\TT = \{T_\sigma\}_{\sigma \in \Sigma}$.  Each $T_\sigma$ is order
preserving on $\RR^\Gsf$ under the usual pointwise order, so $(\RR^\Gsf, \TT)$
is an ADP.  Moreover, given $g \in \RR^\Gsf$, a policy $\sigma \in \Sigma$ is
$g$-greedy whenever 
\begin{equation*}
    \sigma(x) \in \argmax_{a' \in \Asf}
        \int
            \left[
                r(x', a', e') + \beta g(x', a')
            \right]
            \nu(\diff e')
            \quad \text{for all } x \in \Xsf.
\end{equation*}
Since $\Asf$ is finite, such a policy always exists. Hence $(\RR^\Gsf, \TT)$ is
regular.

\begin{proposition}\label{p:stor}
    If $\eE$ is constant subadditive, then the fundamental ADP optimality
    properties hold and HPI converges in finitely many steps.
\end{proposition}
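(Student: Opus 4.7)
The plan is to reduce the proposition to Theorem~\ref{t:bkf}, which requires $(\RR^\Gsf, \TT)$ to be finite, well-posed, order stable, and regular. Finiteness is immediate because $\Xsf$ and $\Asf$ are finite, hence $\Sigma$ is finite; regularity has already been noted in the surrounding text; so the work is to establish that each $T_\sigma$ has a unique fixed point in $\RR^\Gsf$ and is order stable. My strategy is to show that $T_\sigma$ is a $\beta$-contraction on the Banach space $(\RR^\Gsf, \|\cdot\|_\infty)$ and then invoke Example~\ref{eg:pospace} to pass from metric stability to order stability.

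For the contraction, I would fix $g, g' \in \RR^\Gsf$ and let $c \coloneq \|g - g'\|_\infty$. Writing $H_\sigma g(x') = \bar r_\sigma(x') + \beta g(x', \sigma(x'))$ with $\bar r_\sigma(x') \coloneq \int r(x', \sigma(x'), e') \nu(\diff e')$, the $\bar r_\sigma$ terms cancel in $H_\sigma g - H_\sigma g'$, leaving the pointwise bounds $H_\sigma g' - \beta c \leq H_\sigma g \leq H_\sigma g' + \beta c$ on $\Xsf$. I would then apply $\eE$ to both inequalities, using monotonicity together with constant subadditivity. The one-sided hypothesis $\eE(f + \lambda) \leq \eE f + \lambda$ for $\lambda \geq 0$ yields the reverse inequality $\eE(f - \lambda) \geq \eE f - \lambda$ by the substitution $f \mapsto f - \lambda$ (and the hypothesis applied to that shifted function). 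Combining these with monotonicity gives $\eE H_\sigma g' - \beta c \leq \eE H_\sigma g \leq \eE H_\sigma g' + \beta c$, i.e., $\|T_\sigma g - T_\sigma g'\|_\infty \leq \beta c$.

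The Banach fixed point theorem then supplies a unique $v_\sigma \in \RR^\Gsf$ with $T_\sigma v_\sigma = v_\sigma$ and global sup-norm stability, so $(\RR^\Gsf, \TT)$ is well-posed. The pointwise partial order on $\RR^\Gsf$ is closed under sup-norm limits (which coincide with pointwise limits on a finite set), so Example~\ref{eg:pospace} applies and gives order stability of each $T_\sigma$. With all four hypotheses of Theorem~\ref{t:bkf} verified, the fundamental ADP optimality properties follow and HPI converges in finitely many steps. The main obstacle is the careful extraction of two-sided constant tolerance from the one-sided constant subadditivity axiom; beyond that, every step is a direct appeal to a result already proved in the paper.
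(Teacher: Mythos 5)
Your proposal is correct and follows essentially the same route as the paper: a Blackwell-style argument showing each $T_\sigma$ is a $\beta$-contraction in the sup norm via constant subadditivity, then Example~\ref{eg:pospace} for order stability and Theorem~\ref{t:bkf} for the conclusion. The only cosmetic difference is that you obtain the two-sided bound by deriving $\eE(f - \lambda) \geq \eE f - \lambda$ from the one-sided axiom, whereas the paper proves one inequality and swaps the roles of $f$ and $g$.
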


\begin{proof}
    Since $(\RR^\Gsf, \TT)$ is regular and finite, it suffices to show that 
    $(\RR^\Gsf, \TT)$ is also order stable (by Theorem~\ref{t:bkf}). To this
    end, fix $f, g \in \RR^\Gsf$. Since $\eE$ and $H_\sigma$ are order preserving,
    we have
    $T_\sigma \, f 
        = \eE H_\sigma (g + f - g)
        \leq \eE H_\sigma (g + \| f - g\|)
        \leq \eE (H_\sigma \, g + \beta \| f - g\|)$.
    Using constant subadditivity of $\eE$ and rearranging gives
    $T_\sigma \, f - T_\sigma \, g \leq \eE \beta \| f - g\| = \beta \| f - g\|$.
    Reversing the roles of $f$ and $g$ gives $|T_\sigma \, f - T_\sigma \, g | \leq
    \beta \| f - g\|$, so each $T_\sigma \in \TT$ is a contraction on $\RR^\Gsf$.
    Since $\RR^\Gsf$ is complete under the supremum norm, $(\RR^\Gsf, \TT)$ is
    well-posed.  Moreover, from the argument in Example~\ref{eg:pospace},
    $(\RR^\Gsf, \TT)$ is order stable.  This completes the proof of
    Proposition~\ref{p:stor}.
\end{proof}

As an illustration, suppose that $\eE$ is the risk-sensitive certainty
equivalent
\begin{equation*}
    (\eE f)(x, a)
    \coloneq
    \frac{1}{\theta}
        \ln 
        \left\{ 
                \int \exp \left[ \theta f(x')
            \right] P(x, a, \diff x')
        \right\}
        \qquad ((x, a) \in \Gsf)),
\end{equation*}
where $P$ is a stochastic kernel from $\Gsf$ to $\Xsf$ and $\theta$ is a nonzero
constant. This choice of certainty equivalent is constant subadditive, so,
 among other things, Proposition~\ref{p:stor} tells us that
$\sigma \in \Sigma$ is optimal if and only if
$\sigma(x) \in \argmax_{a' \in \Asf}
            \left[
                r(x', a') + \beta g^*(x', a')
            \right]$ for all $x \in \Xsf$,
where $g^*$ is the unique solution to the functional equation
\begin{equation*}
    g(x, a) =
    \frac{1}{\theta}
        \ln 
        \left\{ 
        \int
           \exp \left\{ 
                \theta \max_{a' \in \Asf}
                \left[
                    r(x', a') + \beta g(x', a')
                \right]
            \right\} P(x, a, \diff x')
    \right\}
\end{equation*}
in the value space $\RR^\Gsf$.\footnote{Another example of a nonlinear certainty equivalent operator is the quantile
operator studied in \cite{de2019dynamic}, which allows
for separation of intertemporal elasticity of substitution and risk aversion.
This certainty equivalent is also constant subadditive, so
Proposition~\ref{p:stor} extends the results in \cite{de2019dynamic}.}

\subsection{Firm Valuation}\label{ss:firmex}

We consider a firm valuation problem studied by
\cite{jovanovic1982selection} with the following extensions: (i) firm profits depend on an aggregate shock, as
well as a firm-specific shock and a cross-sectional distribution,
 (ii) the interest rate is allowed to vary over time, (iii) the outside
option of the firm is permitted to depend on aggregates and the cross-section,
and shocks and rewards are allowed to be discontinuous and unbounded.

In this version of the problem,
a firm that receives current profit $\pi(s, \mu, z)$ and then transitions to
the next period, where management will choose to either exit and receive 
$q(\mu', z')$ or continue. Thus, the maximal expected firm value $v(s, \mu, z)$ obeys
\begin{equation}\label{eq:febell}
    v(s, \mu, z) = \pi(s, \mu, z) 
    + \beta(\mu, z) \, \EE_{(s, \mu, z)} \max\{q(\mu', z'),  v(s', \mu', z') \}.
\end{equation}
Here $s$ is an idiosyncratic state for the firm that takes values in set $S$,
$\mu$ is a cross-sectional distribution taking values in a space
$D$, $z$ is an aggregate shock taking values in set $Z$, and $\pi(s, \mu, z)$ is
current profit. Primes denote next period values.   The discount factor $\beta$
depends on cost of capital and hence the current state.
Let $x \coloneq (s, \mu, z)$ take values in $\Xsf \coloneq S \times D \times Z$. Let
$\bB$ be a $\sigma$-algebra over $\Xsf$ that makes
$\pi, \beta$ and the transition probabilities measurable. We rewrite the
dynamics as $x' \sim P(x, \cdot)$, meaning that $P$ is a stochastic kernel on $(\Xsf, \bB)$ and the next
period composite state $x'$ is drawn from distribution $P(x, \cdot)$.    With
this notation, \eqref{eq:febell} becomes
\begin{equation}\label{eq:fe_modbell}
    v(x) = \pi(x) + \beta(x) 
        \int \max \left\{q(x'), v(x') \right\} P(x, \diff x') 
        \qquad (x \in \Xsf).
\end{equation}
A $v$ that solves \eqref{eq:fe_modbell} gives firm valuation at each state under 
optimal management. 

Let $K$ be the \navy{discount operator} defined by
\begin{equation*}
    (Kv)(x) \coloneq \beta(x) \int v(x') P(x, \diff x').
\end{equation*}
We suppose there exists a $\sigma$-finite measure $\phi$ on $(\Xsf, \bB)$ such
that $\pi, q$ and $\beta$  are nonnegative elements of $L_1 \coloneq L_1(\Xsf,
\bB, \phi)$, and that $K$ maps $L_1$ to itself.     We endow $L_1$ with the
$\phi$-a.e.\ pointwise order $\leq$, so that $f \leq g$ means $\phi \{f > g\} =
0$. In what follows, for any linear operator $A$ on $L_1$, we use $\rho(A)$ to
represent the spectral radius of $A$. Also, $A$ is called positive if $0 \leq v$
implies $0 \leq Av$.

\begin{assumption}\label{a:firms}
     The discount operator obeys $\rho(K) < 1$.
\end{assumption}

Assumption~\ref{a:firms} is weaker than that found in \cite{hansen2012recursive}
and related sources, since we impose no irreducibility or compactness conditions
on $K$. (Later, in Proposition~\ref{p:firm_nec}, we show that, when such
conditions \emph{are} in force, Assumption~\ref{a:firms} is both necessary and
sufficient for optimality.)

Let $\Sigma$ be the set of policies, each of which is a
$\bB$-measurable map $\sigma$ from $\Xsf$ to $\{0,1\}$. Here $\sigma(x) = 1$
indicates the decision to exit at state $x$ and $\sigma(x) = 0$ indicates the
decision to continue. To each $\sigma \in \Sigma$ we assign the policy operator
\begin{equation}\label{eq:fppol}
    T_\sigma \, v = \pi + K (\sigma q + (1 - \sigma) v)
\end{equation}
Since $K$ is positive and hence order preserving,  $T_\sigma$ is 
order preserving on $L_1$. Hence $(L_1, \TT)$ is an ADP when $\TT \coloneq \setntn{T_\sigma}{\sigma \in \Sigma}$.

Let $V$ be all $v \in L_1$ such that $0 \leq
v \leq \bar v$, where
$\bar v \coloneq (I - K)^{-1}(\pi + K q)$
and $I$ is the identity map ($\bar v$ is well-defined by
Assumption~\ref{a:firms}).  Straightforward arguments show
that every $T_\sigma$ maps $V$ to itself.  Hence $(V, \TT)$ is also an ADP.
Since $0 \leq K (1 - \sigma) \leq K$ we have $\rho(K(1 - \sigma)) \leq \rho(K) <
1$, so each $T_\sigma$ is has a unique fixed point $v_\sigma$ in $V$.
In particular, $(V, \TT)$ is well-posed.
By definition, its ADP Bellman operator obeys $Tv \coloneq
\bigvee_{\sigma \in \Sigma} T_\sigma \, v = \pi + K (q \vee v)$, which coincides
with \eqref{eq:fe_modbell}.  This means that solving the ADP optimization
problem is equivalent to solving the original dynamic program with Bellman
operator \eqref{eq:fe_modbell}.

\begin{proposition}\label{p:firms}
    If Assumption~\ref{a:firms} holds, then the fundamental ADP optimality
    properties hold and VFI, OPI, and HPI all converge.
\end{proposition}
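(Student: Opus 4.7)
My plan is to invoke Theorem~\ref{t:impliesms} applied to the ADP $(V, \TT)$, since this would yield both the fundamental optimality properties and the convergence of VFI, OPI, and HPI in one stroke. The discussion preceding the statement already establishes that $(V, \TT)$ is regular and well-posed, so the remaining hypotheses to verify are (a) that $V$ is countably chain complete and (b) that each $T_\sigma \in \TT$ is order continuous on $V$.

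For (a), note that $V = [0, \bar v]$ is bounded, with least element $0$ and greatest element $\bar v$; the latter lies in $L_1$ because Assumption~\ref{a:firms} makes the Neumann series $(I - K)^{-1} = \sum_{n \geq 0} K^n$ a bounded positive operator on $L_1$. For any increasing sequence $(v_n) \subset V$, I would set $v(x) := \sup_n v_n(x)$ pointwise $\phi$-a.e.; the bound $v \leq \bar v$ places $v$ in $V$, and a standard Banach-lattice argument identifies $v$ with the least upper bound of $(v_n)$ in the $\phi$-a.e.\ order on $L_1$.

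For (b), fix $\sigma \in \Sigma$ and $v_n \uparrow v$ in $V$. Since $\sigma$ is $\{0,1\}$-valued, $(1-\sigma) v_n \uparrow (1-\sigma) v$ pointwise a.e., and the monotone convergence theorem applied inside the integral defining $K$ gives $K((1-\sigma) v_n)(x) \uparrow K((1-\sigma) v)(x)$ for $\phi$-a.e.\ $x$; therefore $T_\sigma v_n = \pi + K(\sigma q + (1-\sigma) v_n) \uparrow \pi + K(\sigma q + (1-\sigma) v) = T_\sigma v$. With (a) and (b) in hand, Theorem~\ref{t:impliesms} delivers the claim.

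The main subtlety I expect to dwell on is the clean identification of the pointwise a.e.\ supremum with the poset supremum in $L_1$; this is a classical fact about $L_1$ as a Dedekind complete Banach lattice, but it deserves explicit justification given that the framework of Section~\ref{s:opres} treats $V$ purely as a partially ordered set rather than a normed space. Beyond that, the verification is essentially a routine invocation of monotone convergence and positivity of $K$.
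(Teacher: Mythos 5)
Your proposal is correct and follows essentially the same route as the paper: both reduce the claim to Theorem~\ref{t:impliesms} by verifying regularity, well-posedness, order continuity of the policy operators, and (countable) chain completeness of $V = [0, \bar v]$. The only differences are cosmetic---the paper cites standard Banach-lattice facts (positive linear operators on $L_1$ are order continuous; order intervals in $L_1$ are chain complete) where you argue by hand via monotone convergence and pointwise a.e.\ suprema---and note that regularity is actually established inside the paper's proof (via the $v$-greedy policy $\sigma = \1\{q \geq v\}$) rather than in the discussion preceding the statement, as you assume.
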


\begin{proof}
    Given $v \in V$, the set $\setntn{T_\sigma \, v}{\sigma \in \Sigma}$ has greatest
    element $\pi + K (q \vee v)$, which is attained by the $v$-greedy policy $\sigma
    = \1\{q \geq v\}$.  Hence $(V, \TT)$ is regular.  Also, $K$ is
    order continuous because positive linear operators on $L_1$ are order
    continuous (see, e.g., \cite{zaanen2012introduction}, Example~21.6).  It then follows
    that each $T_\sigma$ is order continuous (since the order limit $\uparrow$
    is preserved under basic arithmetic operations -- see, e.g., Theorem~10.2 of
    \cite{zaanen2012introduction}) and, in particular, $(V, \TT)$ is
    order continuous.  Because $(V, \TT)$ is regular, well-posed and order
    continuous, and because $V$ is chain complete (see, e.g., Example~12.5 of
    \cite{zaanen2012introduction}), Theorem~\ref{t:impliesms}
    applies.  This yields the conclusions of Proposition~\ref{p:firms}.
\end{proof}

\begin{figure}
   \centering
   \scalebox{0.6}{ \includegraphics[trim = 0mm 2mm 0mm 0mm, clip]{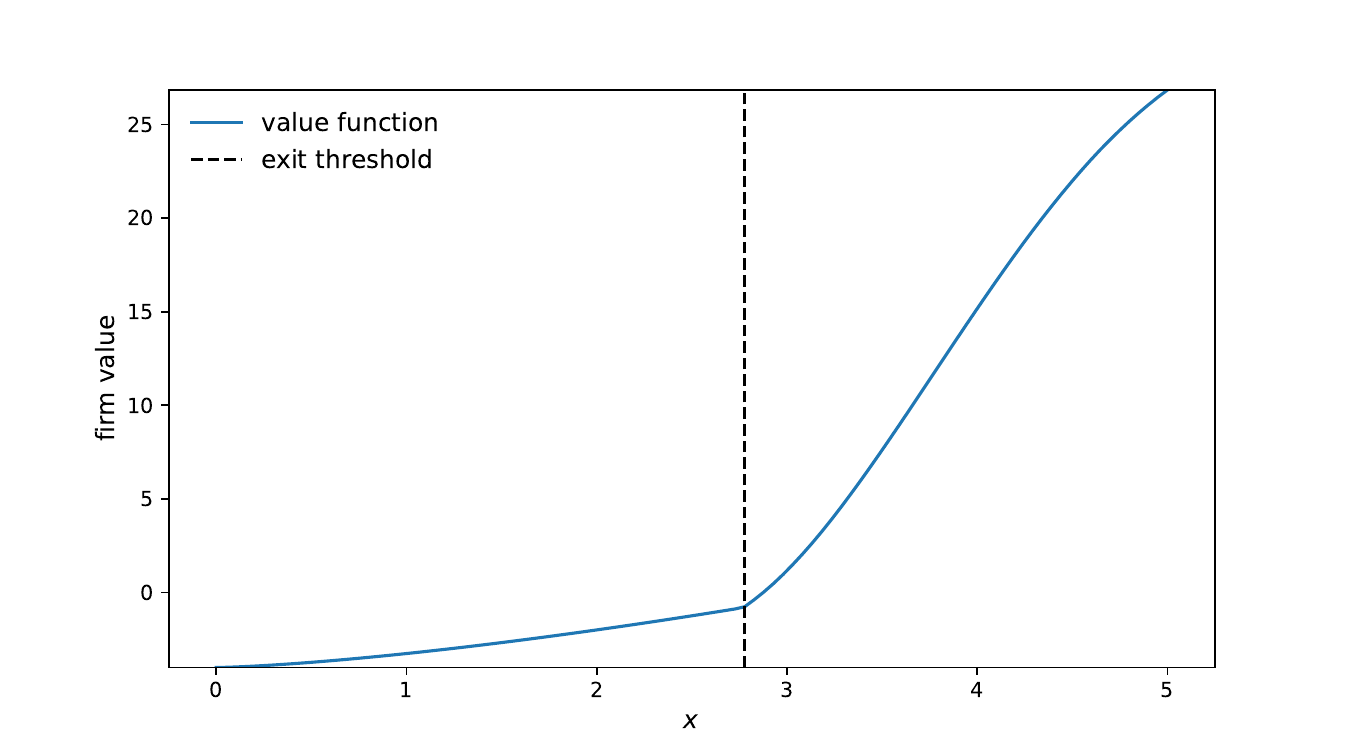}}
   \caption{\label{f:firm_exit} Firm value function and exit threshold}
\end{figure}

Figure~\ref{f:firm_exit} shows an approximation of the value function $v^*$
computed by VFI, as well as a representation of a $v^*$-greedy policy $\sigma$ in the form of an exit threshhold.  For
$x$ below the threshold, $\sigma(x)=0$, indicating that exit is optimal.
In this example, the state space is just $\RR_+$ and $x$ can be thought of as
productivity.  The function $\pi$ is given by 
\begin{equation*}
    \pi(x) = \max_{\ell \geq 0} \left\{p x \ell^\theta - c - \ell \right\},
\end{equation*}
where $p$ is the output price, $\ell$ represents labor input, $\theta$ is a
productivity parameter, and $c$ is a fixed cost.  The dynamics of $x$ are given
by $P(x, \diff x) \eqdist A x$, where $A$ is lognormal $(-0.012, 0.1)$. We set
$\beta = 0.95$, $\theta=0.3$, and $c=4$.  The outside option $q$ is set to zero.

To show that our assumptions are weak, we now prove that, under some mild
conditions, well-posedness fails whenever Assumption~\ref{a:firms} fails.  This
means that Assumption~\ref{a:firms} is necessary for the dynamic program to be
well-defined.

\begin{proposition}\label{p:firm_nec}
    Let $\pi$ be nonzero and let $K$ be weakly compact and irreducible on $L_1$.
    In this setting, if $(V, \TT)$ is well-posed, then Assumption~\ref{a:firms}
    holds.
\end{proposition}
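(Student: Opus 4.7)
I proceed by contrapositive: assuming $\rho(K) \geq 1$, I show that the policy operator for the ``never-exit'' policy $\sigma \equiv 0$ admits no fixed point in $L_1^+$, so $(V, \TT)$ cannot be well-posed for any value space $V \subseteq L_1^+$.

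The first step is to invoke Perron--Frobenius-type spectral theory to produce a strictly positive eigenfunctional of $K^*$ at the spectral radius. Since $K$ is positive, weakly compact, and irreducible on $L_1$, and $L_1$ has the Dunford--Pettis property, $K^2$ is compact. Applying the Krein--Rutman / de Pagter theorem for positive compact irreducible operators, and then using the factorization $(K^*)^2 - \rho(K)^2 I = (K^* - \rho(K) I)(K^* + \rho(K) I)$ to pass from a positive eigenfunctional of $(K^*)^2$ to one of $K^*$ at $\rho(K)$, yields $\rho(K) > 0$ together with some $e^* \in L_1^* = L_\infty$ satisfying $e^* > 0$ $\phi$-a.e.\ and $K^* e^* = \rho(K) e^*$.

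Next I specialize to $\sigma \equiv 0$ (never exit), whose policy operator reduces by \eqref{eq:fppol} to $T_0 v = \pi + K v$. Well-posedness would require $T_0$ to have a fixed point $v \in V \subseteq L_1^+$. Pairing the identity $v = \pi + K v$ with $e^*$ and using $\la K v, e^* \ra = \la v, K^* e^* \ra = \rho(K) \la v, e^* \ra$ gives

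\begin{equation*}
    (1 - \rho(K)) \, \la v, e^* \ra = \la \pi, e^* \ra.
\end{equation*}

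The right-hand side is strictly positive since $e^* > 0$ $\phi$-a.e.\ and $\pi \geq 0$ is nonzero; the left-hand side is nonpositive since $v \geq 0$, $e^* \geq 0$, and $\rho(K) \geq 1$. This contradiction rules out a fixed point of $T_0$ in $L_1^+$, contradicting well-posedness.

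The main obstacle is the spectral step: securing a strictly positive eigenfunctional of $K^*$ at $\rho(K)$ in the weakly compact, irreducible setting on $L_1$. Once that is in hand, the reduction to the single policy $\sigma \equiv 0$ collapses the question into a one-line scalar identity that the sign constraints close immediately, and no further analysis of the other policies in $\Sigma$ is needed.
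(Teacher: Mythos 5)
Your proof is correct and follows essentially the same route as the paper's: both isolate the never-exit policy $\sigma \equiv 0$, pair the fixed-point identity $v = \pi + Kv$ with a strictly positive eigenfunctional $e^*$ of the adjoint at the spectral radius, and read off $(1-\rho(K))\langle v, e^*\rangle = \langle \pi, e^*\rangle$ with both pairings strictly positive. The only differences are cosmetic: you argue by contrapositive rather than directly, and you spell out the spectral step (Dunford--Pettis, de Pagter, the factorization trick) that the paper compresses into a single citation of the Krein--Rutman theorem.
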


\begin{proof}
    Let $\pi$ be nonzero and let $K$ be weakly compact and irreducible.
    Let $K'$ be the adjoint of $K$ and let $\lambda$ be the spectral radius.  By the Krein--Rutman theorem
    (see, in particular, Lemma~4.2.11 of \cite{meyer2012banach}), there exists
    an $e \in L_\infty$ such that $K' e = \lambda e$ and, in addition,
    $\inner{e, f} > 0$ for all nonzero nonnegative $f \in L_1$. Consider the policy $\sigma \equiv 0$.  Under
    this policy we have $T_\sigma \, v = \pi + Kv$. If $(V, \TT)$ is well-posed,
    then there exists a solution $v \in V$ to $v = \pi + K v$ in $V$. Since
    $\pi$ is nonzero and $v = \pi + K v \geq \pi$, the same is true for $v$.
    Now observe that
    $\inner{e, v} 
        = \inner{e, \pi} + \inner{e, Kv}  
        = \inner{e, \pi} + \inner{K' e, v}  
        = \inner{e, \pi} + \lambda \inner{e, v}$.
    Since $v$ and $\pi$ are nonnegative and nonzero, it must be that $\inner{e,
    \pi} > 0$ and $\inner{e, v} > 0$.  Therefore $\lambda$ satisfies $(1-\lambda) \alpha = \beta$ for 
    $\alpha, \beta > 0$.  Hence $\lambda < 1$.
\end{proof}

\section{Conclusion}\label{s:con}

The framework constructed in this paper represents dynamic
programs as operators over partially ordered sets and allows us to acquire  a
range of new optimality results that include many existing results as special
cases.  These methods are suitable for  applications with a number of challenging
features.

A limitation of our results is that we assumed our dynamic
programs are regular. Some dynamic programs do not have this property because
certain policies lead to infinite loss (see, e.g., \cite{li2024exact},
\cite{pates2024optimal}, or
Chapters~3--4 of \cite{bertsekas2022abstract}). Others lack this property due to
nonstandard discounting \citep{balbus2020markov, jaskiewicz2021markov}.
Extensions of the results in this paper to such problems would be valuable.

Another of our  assumptions that could be altered  is well-posedness, i.e.,that each $T_\sigma$ has a
unique fixed point $v_\sigma$.  It  could be replaced by
generalizing the approach in \cite{bertsekas2022abstract}, where $v_\sigma$ is defined by $v_\sigma(x)
\coloneq \limsup_k T_\sigma^k \, \bar v(x)$ for some fixed reference point $\bar v
\in V$. The limsup could be generalized to an abstract partially ordered set
environment by setting $v_\sigma \coloneq \wedge_{n \geq 1} \, \vee_{k
\geq n} T_\sigma^k \bar v$. The element $v_\sigma$ would always be
well-defined if, say, $V$ is a complete lattice.  We have not yet explored this
modification of our  framework,  but think it would be worthwhile. 

We have focused on applications and theoretical settings where
optimal policies always exist. If one wishes to consider approximately optimal
policies, then some metric on the value space must be added in order to measure
approximations. A promising  path forward would be to replace the assumption that
$V$ is an arbitrary partially ordered set with the assumption that $V$ is a
partially ordered space; that is, a metric space with partial order $\preceq$
such that the order $\preceq$ is preserved under limits. 

Many further extensions  could be built on top of our framework.  One example is average-cost optimality for dynamic
programs, which we have not considered.  Another is continuous time models.
These topics are also left for future work.

%
%
%
%
%

\bibliographystyle{apalike}
\bibliography{../qe_bib}

\begin{thebibliography}{}

\bibitem[Balbus et~al., 2020]{balbus2020markov}
Balbus, {\L}., Ja{\'s}kiewicz, A., and Nowak, A.~S. (2020).
\newblock Markov perfect equilibria in a dynamic decision model with quasi-hyperbolic discounting.
\newblock {\em Annals of Operations Research}, 287:573--591.

\bibitem[B{\"a}uerle and Ja{\'s}kiewicz, 2018]{bauerle2018stochastic}
B{\"a}uerle, N. and Ja{\'s}kiewicz, A. (2018).
\newblock Stochastic optimal growth model with risk sensitive preferences.
\newblock {\em Journal of Economic Theory}, 173:181--200.

\bibitem[B{\"a}uerle et~al., 2021]{bauerle2021stochastic}
B{\"a}uerle, N., Ja{\'s}kiewicz, A., and Nowak, A.~S. (2021).
\newblock Stochastic dynamic programming with non-linear discounting.
\newblock {\em Applied Mathematics \& Optimization}, 84(3):2819--2848.

\bibitem[B{\"a}uerle and Rieder, 2011]{bauerle2011markov}
B{\"a}uerle, N. and Rieder, U. (2011).
\newblock {\em Markov decision processes with applications to finance}.
\newblock Springer Science \& Business Media.

\bibitem[Bellemare et~al., 2017]{bellemare2017distributional}
Bellemare, M.~G., Dabney, W., and Munos, R. (2017).
\newblock A distributional perspective on reinforcement learning.
\newblock In {\em International conference on machine learning}, pages 449--458. PMLR.

\bibitem[Bellman, 1957]{bellman1957dynamic}
Bellman, R. (1957).
\newblock {\em Dynamic programming}.
\newblock American Association for the Advancement of Science.

\bibitem[Bertsekas, 2012]{bertsekas2012dynamic}
Bertsekas, D. (2012).
\newblock {\em Dynamic programming and optimal control}, volume~1.
\newblock Athena Scientific.

\bibitem[Bertsekas, 2021]{bertsekas2021rollout}
Bertsekas, D. (2021).
\newblock {\em Rollout, policy iteration, and distributed reinforcement learning}.
\newblock Athena Scientific.

\bibitem[Bertsekas, 1977]{bertsekas1977monotone}
Bertsekas, D.~P. (1977).
\newblock Monotone mappings with application in dynamic programming.
\newblock {\em SIAM Journal on Control and Optimization}, 15(3):438--464.

\bibitem[Bertsekas, 2022]{bertsekas2022abstract}
Bertsekas, D.~P. (2022).
\newblock {\em Abstract dynamic programming}.
\newblock Athena Scientific, 3 edition.

\bibitem[Bloise et~al., 2024]{bloise2024not}
Bloise, G., Le~Van, C., and Vailakis, Y. (2024).
\newblock Do not blame {B}ellman: It is {K}oopmans' fault.
\newblock {\em Econometrica}, 92(1):111--140.

\bibitem[Cagetti et~al., 2002]{cagetti2002robustness}
Cagetti, M., Hansen, L.~P., Sargent, T.~J., and Williams, N. (2002).
\newblock Robustness and pricing with uncertain growth.
\newblock {\em The Review of Financial Studies}, 15(2):363--404.

\bibitem[Davey and Priestley, 2002]{davey2002introduction}
Davey, B.~A. and Priestley, H.~A. (2002).
\newblock {\em Introduction to lattices and order}.
\newblock Cambridge University Press.

\bibitem[de~Castro and Galvao, 2019]{de2019dynamic}
de~Castro, L. and Galvao, A.~F. (2019).
\newblock Dynamic quantile models of rational behavior.
\newblock {\em Econometrica}, 87(6):1893--1939.

\bibitem[Denardo, 1967]{denardo1967contraction}
Denardo, E.~V. (1967).
\newblock Contraction mappings in the theory underlying dynamic programming.
\newblock {\em Siam Review}, 9(2):165--177.

\bibitem[Epstein and Zin, 1989]{epstein1989risk}
Epstein, L.~G. and Zin, S.~E. (1989).
\newblock Risk aversion and the temporal behavior of consumption and asset returns: A theoretical framework.
\newblock {\em Econometrica}, 57(4):937--969.

\bibitem[Fei et~al., 2021]{fei2021exponential}
Fei, Y., Yang, Z., Chen, Y., and Wang, Z. (2021).
\newblock Exponential {B}ellman equation and improved regret bounds for risk-sensitive reinforcement learning.
\newblock {\em Advances in Neural Information Processing Systems}, 34:20436--20446.

\bibitem[Gerstenberg et~al., 2023]{gerstenberg2023solutionsdistributionalbellmanequation}
Gerstenberg, J., Neininger, R., and Spiegel, D. (2023).
\newblock On solutions of the distributional {B}ellman equation.

\bibitem[Hansen and Sargent, 1995]{hansen1995discounted}
Hansen, L.~P. and Sargent, T.~J. (1995).
\newblock Discounted linear exponential quadratic {G}aussian control.
\newblock {\em IEEE Transactions on Automatic control}, 40(5):968--971.

\bibitem[Hansen and Sargent, 2011]{hansen2011robustness}
Hansen, L.~P. and Sargent, T.~J. (2011).
\newblock {\em Robustness}.
\newblock Princeton university press.

\bibitem[Hansen and Scheinkman, 2012]{hansen2012recursive}
Hansen, L.~P. and Scheinkman, J.~A. (2012).
\newblock Recursive utility in a {M}arkov environment with stochastic growth.
\newblock {\em Proceedings of the National Academy of Sciences}, 109(30):11967--11972.

\bibitem[Haskell et~al., 2016]{haskell2016empirical}
Haskell, W.~B., Jain, R., and Kalathil, D. (2016).
\newblock Empirical dynamic programming.
\newblock {\em Mathematics of Operations Research}, 41(2):402--429.

\bibitem[Hern{\'a}ndez-Lerma and Lasserre, 2012]{hernandez2012discrete}
Hern{\'a}ndez-Lerma, O. and Lasserre, J.~B. (2012).
\newblock {\em Discrete-time {M}arkov control processes: basic optimality criteria}, volume~30.
\newblock Springer Science \& Business Media.

\bibitem[Hubmer et~al., 2020]{hubmer2020sources}
Hubmer, J., Krusell, P., and Smith, Jr, A.~A. (2020).
\newblock Sources of us wealth inequality: Past, present, and future.
\newblock {\em NBER Macroeconomics Annual 2020, volume 35}.

\bibitem[Iyengar, 2005]{iyengar2005robust}
Iyengar, G.~N. (2005).
\newblock Robust dynamic programming.
\newblock {\em Mathematics of Operations Research}, 30(2):257--280.

\bibitem[Ja{\'s}kiewicz and Nowak, 2021]{jaskiewicz2021markov}
Ja{\'s}kiewicz, A. and Nowak, A.~S. (2021).
\newblock {M}arkov decision processes with quasi-hyperbolic discounting.
\newblock {\em Finance and Stochastics}, 25(2):189--229.

\bibitem[Jovanovic, 1982]{jovanovic1982selection}
Jovanovic, B. (1982).
\newblock Selection and the evolution of industry.
\newblock {\em Econometrica}, pages 649--670.

\bibitem[Kalathil et~al., 2021]{kalathil2021empirical}
Kalathil, D., Borkar, V.~S., and Jain, R. (2021).
\newblock Empirical q-value iteration.
\newblock {\em Stochastic Systems}, 11(1):1--18.

\bibitem[Kochenderfer et~al., 2022]{kochenderfer2022algorithms}
Kochenderfer, M.~J., Wheeler, T.~A., and Wray, K.~H. (2022).
\newblock {\em Algorithms for decision making}.
\newblock The MIT Press.

\bibitem[Kristensen et~al., 2021]{kristensen2021solving}
Kristensen, D., Mogensen, P.~K., Moon, J.~M., and Schjerning, B. (2021).
\newblock Solving dynamic discrete choice models using smoothing and sieve methods.
\newblock {\em Journal of Econometrics}, 223(2):328--360.

\bibitem[Li and Rantzer, 2024]{li2024exact}
Li, Y. and Rantzer, A. (2024).
\newblock Exact dynamic programming for positive systems with linear optimal cost.
\newblock {\em IEEE Transactions on Automatic Control}.

\bibitem[Lu et~al., 2023]{lu2023does}
Lu, J., Luo, Y., Saito, K., and Xin, Y. (2023).
\newblock Does {H}arold {Z}urcher have time separable preferences?
\newblock Technical report, North American Summer Meeting of the Econometric Society.

\bibitem[Meyer-Nieberg, 2012]{meyer2012banach}
Meyer-Nieberg, P. (2012).
\newblock {\em Banach lattices}.
\newblock Springer Science \& Business Media.

\bibitem[Mogensen, 2018]{mogensen2018solving}
Mogensen, P.~K. (2018).
\newblock Solving dynamic discrete choice models: Integrated or expected value function?
\newblock {\em arXiv preprint arXiv:1801.03978}.

\bibitem[Munos and Szepesv{\'a}ri, 2008]{munos2008finite}
Munos, R. and Szepesv{\'a}ri, C. (2008).
\newblock Finite-time bounds for fitted value iteration.
\newblock {\em Journal of Machine Learning Research}, 9(5).

\bibitem[Naik et~al., 2019]{naik2019discounted}
Naik, A., Shariff, R., Yasui, N., Yao, H., and Sutton, R.~S. (2019).
\newblock Discounted reinforcement learning is not an optimization problem.
\newblock {\em arXiv preprint arXiv:1910.02140}.

\bibitem[Pates and Rantzer, 2024]{pates2024optimal}
Pates, R. and Rantzer, A. (2024).
\newblock Optimal control on positive cones.
\newblock Technical report, arXiv:2407.18774.

\bibitem[Powell, 2016]{powell2016perspectives}
Powell, W.~B. (2016).
\newblock Perspectives of approximate dynamic programming.
\newblock {\em Annals of Operations Research}, 241:319--356.

\bibitem[Puterman, 2005]{puterman2005markov}
Puterman, M.~L. (2005).
\newblock {\em Markov decision processes: discrete stochastic dynamic programming}.
\newblock Wiley Interscience.

\bibitem[Ren and Stachurski, 2021]{ren2021dynamic}
Ren, G. and Stachurski, J. (2021).
\newblock Dynamic programming with value convexity.
\newblock {\em Automatica}, 130:109641.

\bibitem[Rust, 1987]{rust1987optimal}
Rust, J. (1987).
\newblock Optimal replacement of {GMC} bus engines: An empirical model of {H}arold {Z}urcher.
\newblock {\em Econometrica}, pages 999--1033.

\bibitem[Rust, 1994]{rust1994structural}
Rust, J. (1994).
\newblock Structural estimation of {M}arkov decision processes.
\newblock {\em Handbook of Econometrics}, 4:3081--3143.

\bibitem[Rust et~al., 2002]{rust2002there}
Rust, J., Traub, J.~F., and Wozniakowski, H. (2002).
\newblock Is there a curse of dimensionality for contraction fixed points in the worst case?
\newblock {\em Econometrica}, 70(1):285--329.

\bibitem[Sidford et~al., 2023]{sidford2023variance}
Sidford, A., Wang, M., Wu, X., and Ye, Y. (2023).
\newblock Variance reduced value iteration and faster algorithms for solving {M}arkov decision processes.
\newblock {\em Naval Research Logistics (NRL)}, 70(5):423--442.

\bibitem[Toda, 2023]{toda2023unbounded}
Toda, A.~A. (2023).
\newblock Unbounded {M}arkov dynamic programming with weighted supremum norm {P}erov contractions.

\bibitem[Watkins, 1989]{watkins1989learning}
Watkins, C. J. C.~H. (1989).
\newblock Learning from delayed rewards.
\newblock Technical report, PhD Thesis, King's College, Cambridge United Kingdom.

\bibitem[Yu et~al., 2024]{yu2024fast}
Yu, Z., Dai, L., Xu, S., Gao, S., and Ho, C.~P. (2024).
\newblock Fast {B}ellman updates for {W}asserstein distributionally robust {MDP}s.
\newblock {\em Advances in Neural Information Processing Systems}, 36.

\bibitem[Zaanen, 2012]{zaanen2012introduction}
Zaanen, A.~C. (2012).
\newblock {\em Introduction to operator theory in Riesz spaces}.
\newblock Springer.

\end{thebibliography}

\end{document}